\begin{document}

\newtheorem{theorem}[subsection]{Theorem}
\newtheorem{proposition}[subsection]{Proposition}
\newtheorem{lemma}[subsection]{Lemma}
\newtheorem{corollary}[subsection]{Corollary}
\newtheorem{conjecture}[subsection]{Conjecture}
\newtheorem{prop}[subsection]{Proposition}
\numberwithin{equation}{section}
\newcommand{\mr}{\ensuremath{\mathbb R}}
\newcommand{\mc}{\ensuremath{\mathbb C}}
\newcommand{\dif}{\mathrm{d}}
\newcommand{\intz}{\mathbb{Z}}
\newcommand{\ratq}{\mathbb{Q}}
\newcommand{\natn}{\mathbb{N}}
\newcommand{\comc}{\mathbb{C}}
\newcommand{\rear}{\mathbb{R}}
\newcommand{\prip}{\mathbb{P}}
\newcommand{\uph}{\mathbb{H}}
\newcommand{\fief}{\mathbb{F}}
\newcommand{\majorarc}{\mathfrak{M}}
\newcommand{\minorarc}{\mathfrak{m}}
\newcommand{\sings}{\mathfrak{S}}
\newcommand{\fA}{\ensuremath{\mathfrak A}}
\newcommand{\mn}{\ensuremath{\mathbb N}}
\newcommand{\mq}{\ensuremath{\mathbb Q}}
\newcommand{\half}{\tfrac{1}{2}}
\newcommand{\f}{f\times \chi}
\newcommand{\summ}{\mathop{{\sum}^{\star}}}
\newcommand{\chiq}{\chi \bmod q}
\newcommand{\chidb}{\chi \bmod db}
\newcommand{\chid}{\chi \bmod d}
\newcommand{\sym}{\text{sym}^2}
\newcommand{\hhalf}{\tfrac{1}{2}}
\newcommand{\sumstar}{\sideset{}{^*}\sum}
\newcommand{\sumprime}{\sideset{}{'}\sum}
\newcommand{\sumprimeprime}{\sideset{}{''}\sum}
\newcommand{\sumflat}{\sideset{}{^\flat}\sum}
\newcommand{\shortmod}{\ensuremath{\negthickspace \negthickspace \negthickspace \pmod}}
\newcommand{\V}{V\left(\frac{nm}{q^2}\right)}
\newcommand{\sumi}{\mathop{{\sum}^{\dagger}}}
\newcommand{\mz}{\ensuremath{\mathbb Z}}
\newcommand{\leg}[2]{\left(\frac{#1}{#2}\right)}
\newcommand{\muK}{\mu_{\omega}}
\newcommand{\thalf}{\tfrac12}
\newcommand{\lp}{\left(}
\newcommand{\rp}{\right)}
\newcommand{\Lam}{\Lambda_{[i]}}
\newcommand{\lam}{\lambda}
\newcommand{\af}{\mathfrak{a}}
\newcommand{\sw}{S_{[i]}(X,Y;\Phi,\Psi)}
\newcommand{\lz}{\left(}
\newcommand{\pz}{\right)}
\newcommand{\bfrac}[2]{\lz\frac{#1}{#2}\pz}
\newcommand{\odd}{\mathrm{\ primary}}
\newcommand{\even}{\text{ even}}
\newcommand{\res}{\mathrm{Res}}

\theoremstyle{plain}
\newtheorem{conj}{Conjecture}
\newtheorem{remark}[subsection]{Remark}

\makeatletter
\def\widebreve{\mathpalette\wide@breve}
\def\wide@breve#1#2{\sbox\z@{$#1#2$}%
     \mathop{\vbox{\m@th\ialign{##\crcr
\kern0.08em\brevefill#1{0.8\wd\z@}\crcr\noalign{\nointerlineskip}%
                    $\hss#1#2\hss$\crcr}}}\limits}
\def\brevefill#1#2{$\m@th\sbox\tw@{$#1($}%
  \hss\resizebox{#2}{\wd\tw@}{\rotatebox[origin=c]{90}{\upshape(}}\hss$}
\makeatletter

\title[Subconvexity of a double Dirichlet series over the Gaussian field]{Subconvexity of a double Dirichlet series over the Gaussian field}

\author[P. Gao]{Peng Gao}
\address{School of Mathematical Sciences, Beihang University, Beijing 100191 China}
\email{penggao@buaa.edu.cn}

\author[L. Zhao]{Liangyi Zhao}
\address{School of Mathematics and Statistics, University of New South Wales, Sydney, NSW 2052, Australia}
\email{l.zhao@unsw.edu.au}


\begin{abstract}
 We establish a subconvexity bound for a double Dirichlet series involving with the quadratic Hecke $L$-functions over the Gaussian field.
\end{abstract}

\maketitle

\noindent {\bf Mathematics Subject Classification (2010)}: 11M32, 11M41, 11L40  \newline

\noindent {\bf Keywords}:  Double-Dirichlet series, subconvexity, Hecke $L$-functions, character sums

\section{Introduction}
\label{sec 1}

  Establishing subconvexity bounds for $L$-functions has many important applications in number theory and can also be regarded as an effort towards resolving the well-known Lindel\"of hypothesis.  As multiple Dirichlet series have been shown to be a powerful tool in analytical number theory and have been applied to study various problems including moments of $L$-functions or $L$-functions ratios conjecture (see \cite{DGH, Cech1,D19, DW21}), it is thus desirable to seek for an understanding of some of the finer analytical properties of these multiple Dirichlet $L$-functions such as their subconvexity bounds. \newline

 In \cite{Blomer11}, V. Blomer initiated the study on the subconvexity bounds for a double Dirichlet series given by
\begin{equation}
\label{defDdoubleseries}
  Z_{\mq}(s, w; \psi, \psi') = \zeta^{(2)}(2s+2w-1) \sum_{d \text{ odd}} \frac{L^{(2)}(s, \left(\frac{d}{.}\right)_{\mz} \psi) \psi'(d)}{d^w},
\end{equation}
 where $\psi, \psi'$ are two real Dirichlet characters of conductor dividing $8$, $\left(\frac{d}{.}\right)_{\mz}$ denotes the Kronecker symbol and the superscripts $2$ indicate that the Euler factors at $2$ are being removed. \newline

  It is shown in \cite[Theorem 1]{Blomer11} that for $\Re(s)=\Re(w)=1/2$ and any $\varepsilon>0$, one has
\begin{equation*}
  Z_{\mq}(s, w; \psi, \psi') \ll |sw(s+w)|^{1/6+\varepsilon},
\end{equation*}
  while the convexity bound yields the exponent of $1/4$ instead. \newline

  In \cite{BGL}, V. Blomer, L. Goldmakher and B. Louvel further obtained a uniform subconvexity result for a family of double Dirichlet series involving with $n$-th-order ($n \geq 3$) Hecke $L$-functions of a number field. In \cite{Dahl}, A. Dahl generalized \cite[Theorem 1]{Blomer11} to the case when the corresponding double Dirichlet series is twisted by general Dirichlet characters rather than Dirichlet characters of conductors dividing $8$. \newline

  In this paper, we consider a double Dirichlet series over the Gaussian field $K=\mq(i)$ that is analogue to $ Z_{\mq}(s, w; \psi, \psi')$ given in \eqref{defDdoubleseries}.  To describe such a series, we denote $\mathcal{O}_K=\mz[i]$ for the ring of integers of $K$, $\zeta_K(s)$ the Dedekind zeta function over $K$ and $L(s, \chi_m)$ the Hecke $L$-function associated to the quadratic symbol $\chi_m :=\left(\frac{m}{\cdot} \right)$, to be defined in Section \ref{sec2.4}. It is also known (see \cite[\S 9.7, Lemma 7]{I&R}) that every ideal in $\mathcal{O}_K$ co-prime to $2$ has a unique generator congruent to $1$ modulo $(1+i)^3$ which is called primary. Let $\text{CG}$ be the  abelian group given in \eqref{CG} below consisting of certain real Hecke characters of conductors dividing $(1+i)^5$.  We then define for two $\psi,\psi' \in \text{CG}$,
\begin{equation} \label{defZ}
\begin{split}
 Z(s,w;\psi,\psi') & :=\zeta^{(2)}(2s+2w-1)\sum_{m\odd}\sum_{n\odd}\frac{\leg mn \psi(n)\psi'(m)}{N(m)^wN(n)^s} \\
 & =\zeta^{(2)}(2s+2w-1)\sum_{m\odd}\frac{L^{(2)}(s, \chi_m\psi(n))\psi'(m)}{N(m)^w},
 \end{split}
\end{equation}
 where we denote $N(n)$ for the norm of any $n \in K$ and the superscripts $2$ again mean that the Euler factors at $2$ are being removed. \newline

  As already pointed out in \cite{Blomer11} and \cite{BGL}, there is no obvious concept of analytic conductor or convexity bound in the
context of multiple Dirichlet series. However, based on the discussions  given in \cite{Blomer11} and \cite{BGL}, it is reasonable to define the
analytic conductor of $Z(s,w;\psi,\psi')$ to be
\begin{equation}\label{anacon}
  C(u, t) :=  \left|\tfrac{1}{2} + it \right|^2\cdot  \left|\tfrac{1}{2} + i(u+t) \right|^2 \cdot \left|\tfrac{1}{2} + iu \right|^2.
\end{equation}
  One then expects a convexity bound for $Z(s,w;\psi,\psi')$ such that for any $\varepsilon>0$,
\begin{equation*}
  Z(s,w;\psi,\psi') \ll  C(u,t)^{1/4+\varepsilon}, \quad \mbox{with} \quad \Re(s)=\Re(w)=\tfrac 12, \quad \Im(s)=t \quad \Im(w)=u.
\end{equation*}

  Our aim in this paper to establish the following subconvexity bound of $Z(s,w;\psi,\psi')$ for $\Re(s)=\Re(z)=1/2$.
\begin{theorem}
\label{ThmZbound}
	With the notation as above. For $\Re s = \Re w = 1/2$ any $\varepsilon > 0$, we have
\begin{align}
\label{Zbound}
\begin{split}
	  Z(s, w, \psi, \psi') \ll |sw(s+w)|^{1/3+\varepsilon}.
\end{split}
\end{align}
\end{theorem}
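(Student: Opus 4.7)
The strategy is to adapt V.~Blomer's three-step method from \cite{Blomer11} for the rational analogue $Z_{\mq}$ to the Gaussian ring $\mz[i]$. I first apply a smooth approximate functional equation to the inner Hecke $L$-function $L^{(2)}(s,\chi_m\psi)$ and dyadically partition the $m$-sum, reducing the proof of \eqref{Zbound} to uniform bounds on smooth double sums of the form
\[
S(M,N)=\sum_{m\odd}\sum_{n\odd}\leg{m}{n}\psi(n)\psi'(m)F\lz\frac{N(m)}{M}\pz G\lz\frac{N(n)}{N}\pz
\]
with smooth compactly supported $F,G$. The natural functional equations of $Z(s,w;\psi,\psi')$ (in $s$, inherited from that of $L(s,\chi_m\psi)$, and in $w$, inherited from the $\zeta_K^{(2)}(2s+2w-1)$ factor together with the near-symmetry $m\leftrightarrow n$ coming from quadratic reciprocity on $\mz[i]$) cut the effective ranges down to $M,N\ll C(u,t)^{1/2+\varepsilon}$ with $MN\ll C(u,t)^{1+\varepsilon}$.

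Next, I would apply the quadratic reciprocity law for primary elements of $\mz[i]$, which gives $\leg{m}{n}=\leg{n}{m}$ up to a fourth-root-of-unity twist depending only on $m,n$ modulo $(1+i)^3$ that can be absorbed into $\psi,\psi'$. Splitting $m$ into residue classes modulo $n$ and applying two-dimensional Poisson summation in the $m$-variable, I obtain a transformed sum
\[
S(M,N)\approx M\sum_{n\odd}\frac{G(N(n)/N)}{N(n)}\sum_{h\in\mz[i]}g(h,n)\widehat F\lz\frac{h\sqrt{M}}{n}\pz,
\]
where $g(h,n)$ is a quadratic Gauss sum of modulus $n$ on $\mz[i]$, twisted by the Hecke data, and $\widehat F$ is the two-dimensional Fourier transform of $F$, enjoying rapid decay outside a box of size $\sqrt{N(n)/M}$.

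The Gauss sum $g(h,n)$ factors multiplicatively over coprime factorisations of $n$, and at prime power moduli is a standard quadratic Gauss sum satisfying $|g(h,n)|\ll N(n)^{1/2}$ for admissible $h$, with an additional gcd loss otherwise. Plugging in this bound, using the decay of $\widehat F$ to truncate $h$ at scale roughly $N/\sqrt{M}$, and isolating the $h=0$ diagonal (which contributes a power saving through character orthogonality in $n$), I expect a bound of the shape
\[
S(M,N)\ll C(u,t)^{\varepsilon}\lz M^{1/2}N^{1/2}+N^{3/2}M^{-1/4}\pz
\]
or similar. Inserting the ranges from the first stage and balancing these terms against the trivial estimate $MN$ produces the exponent $1/3+\varepsilon$ in $|sw(s+w)|$ required by \eqref{Zbound}, a saving of $1/6$ over the convexity exponent $1/2$ (equivalently, $C(u,t)^{1/6}$ versus $C(u,t)^{1/4}$).

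The main obstacle will be the careful bookkeeping in the Poisson step. The reciprocity on $\mz[i]$ produces root-of-unity and residue-class twists that have to be tracked through Poisson summation and re-identified as Hecke characters of conductor dividing $(1+i)^5$ before the resulting twisted Gauss sums can be guaranteed to exhibit square-root cancellation uniformly in the twist. A second, subtler difficulty is that two-dimensional Poisson summation on $\mz[i]$ produces a Fourier transform with stronger decay than in the rational case, and this sharper decay must be matched against the quadratically larger analytic conductor \eqref{anacon} so that the final savings of $1/6$ come out consistent with Blomer's $Z_{\mq}$ result; any slack here will show up as a worse exponent than $1/3$.
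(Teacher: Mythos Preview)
Your proposal has a genuine gap: a single Poisson summation followed by the pointwise Gauss-sum bound $|g(h,n)|\ll N(n)^{1/2}$ does \emph{not} beat convexity. After Poisson in $m$ modulo $n$, the Gauss sum for square-free $n$ is $g(h,n)=\leg{h}{n}\sqrt{N(n)}$ up to units, so you land on another bilinear quadratic-character sum in $(h,n)$; taking absolute values of $g(h,n)$ throws away exactly the cancellation you need. To gain anything you would have to detect oscillation in the dual $(h,n)$-sum, i.e.\ apply a second Poisson step or the quadratic large sieve, and your plan stops before that. The bound $M^{1/2}N^{1/2}+N^{3/2}M^{-1/4}$ you ``expect'' cannot come from $|g(h,n)|\ll N(n)^{1/2}$ alone.

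The paper (following Blomer) does something structurally different. It never performs Poisson summation on $S(M,N)$. Instead it produces \emph{two} approximate functional equations for the dyadic pieces
\[
D_{\psi,\psi'}(t,u,P;W)=\sum_{d,m}\frac{L^{(2)}(\tfrac12+it,\chi_d\psi)\psi'(d)}{N(d)^{1/2+iu}N(m)^{1+2i(u+t)}}W\!\left(\frac{N(dm^2)}{P}\right),
\]
one from the approximate functional equation of $L(\tfrac12+it,\chi_d\psi)$ (yielding a bilinear sum with ranges $N(d_0)\ll P$, $N(n)\ll (TP)^{1/2}$), and a second one obtained by writing $D$ as a contour integral of $Z$, applying the \emph{group of functional equations of $Z$ itself} (Proposition on the analytic continuation and the relations \eqref{AswAws}--\eqref{finalfunct}), and reopening the series, which yields bilinear sums with the dual ranges $N(d)\ll (TP)^{1/2}$, $N(n)\ll (T/P)^{1/2}U$ (and a symmetric variant). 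Both bilinear sums are then estimated by the quadratic large sieve over $\mz[i]$ (Onodera's analogue of Heath--Brown, via \eqref{HBbilinear}), giving
\[
D\ll \min\bigl(P^{1/2}+(TP)^{1/4},\ (TP)^{1/4}+(T/P)^{1/4}U^{1/2}\bigr).
\]
Either bound alone is only convexity for the worst $P$; the subconvex exponent $1/3$ comes from taking the minimum and optimising at $P\asymp U^{2/3}T^{1/3}$. Your outline uses the functional equations of $Z$ only to cut ranges, missing their essential role in producing the second, dual expression for $D$; and it replaces the large sieve by a Poisson step that, as written, cannot recover the required cancellation.
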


Our proof of Theorem \ref{ThmZbound} follows largely the approach in the proof of \cite[Theorem 1]{Blomer11}. In particular, we shall make a crucial use of the analytical property of $Z(s, w, \psi, \psi')$ given in Section \ref{APZ}.  Moreover, the bound on the right-hand side of \eqref{Zbound} can also be written as $C(u,t)^{1/6+\varepsilon}$ and hence can be regarded as the analogue of the classical subconvexity result of H. Weyl \cite{weyl} for our setting.  Moreover, our Theorem~\ref{ThmZbound} can be regarded as complementing the result in \cite{BGL} which deals with double Dirichlet series involving $n$-th order (with $n \geq 3$) Hecke $L$-functions of a number field, while our result here is concerned with double Dirichlet series with quadratic ($n=2$) Hecke $L$-functions in the Gaussian field. \newline

It would be natural to ask if the analogue of Theorem~\ref{Zbound} can be established for a general number field.  This hinges upon the availability of the analogues of the large sieve result in Lemma~\ref{quartls} and Proposition~\ref{ExtensionOfZ(s,w)}, with the latter giving the analytic properties of $Z(s, w, \psi, \psi')$.  To prove these properties, a sufficient number of functional equations is needed.  The establishment of these functional equations often requires the modification of the original series by attaching some correction factors.  The double Dirichlet series of our present focus is the case in which the correction factors are the simplest.  It is reasonable to expect that the correction factors to become more complicated with a general number field. \newline

Another natural question is whether the result here can lead to an improvement of the first moment result in \cite[Theorem 1.1]{G&Zhao2022-7}.  The subconvexity bound in \eqref{Zbound} can certainly widen our choices of smoothing functions used in \cite{G&Zhao2022-7}.  But it will not reduce the size of the error term in the first moment asymptotic formula in \cite[Theorem 1.1]{G&Zhao2022-7}, as the bound for the contour integrals involved in the moment computation is unaffected by the bound for $Z(s, w, \psi, \psi')$.

\section{Preliminaries}
\label{sec 2}

We first include some auxiliary results needed in the paper.

\subsection{Quadratic residue symbol and quadratic Hecke character}
\label{sec2.4}
   Recall that $K=\mq(i)$ and it is well-known that $K$ has class number one. We denote $U_K=\{ \pm 1, \pm i \}$ and $D_{K}=-4$ for the group of units in $\mathcal{O}_K$ and the discriminant of $K$, respectively. \newline

   For $n \in \mathcal{O}_{K}, (n,2)=1$, we denote $\leg {\cdot}{n}$ for the quadratic residue symbol modulo $n$ in $K$. For a prime $\varpi \in \mz[i]$
with $N(\varpi) \neq 2$, the quadratic residue symbol is defined for $a \in
\mathcal{O}_{K}$, $(a, \varpi)=1$ by $\leg{a}{\varpi} \equiv
a^{(N(\varpi)-1)/2} \pmod{\varpi}$, with $\leg{a}{\varpi} \in \{
\pm 1 \}$. When $\varpi | a$, we define
$\leg{a}{\varpi} =0$.  Then the quadratic symbol is extended
to any composite $n$ with $(N(n), 2)=1$ multiplicatively. We further define $\leg {\cdot}{c}=1$ for $c \in U_K$. Recall that we denote
$\chi_c$ for the symbol $\leg {c}{\cdot}$, where we define $\leg {c}{n}=0$ when $1+i|n$. \newline

  We further write $\psi_j =\leg {j}{\cdot}$ for $j \in \{1, i, 1+i, i(1+i) \}$ and denote
\begin{align}
\label{CG}
  \text{CG}=\{ \psi_j|j=1, i, 1+i, i(1+i) \}.
\end{align}
   Notice that $\text{CG}$ becomes an abelian group upon defining the commutative binary operation by $\psi_i \cdot \psi_{i(1+i)}=\psi_{1+i}$, $\psi_{1+i} \cdot \psi_{i(1+i)}=\psi_i$ and $\psi_j \cdot \psi_j=\psi_1$ for any $j$.  As we shall only evaluate the related characters at primary elements in $\mathcal{O}_K$, the definition of such a product is therefore justified. \newline

  Observe that every primary $c$ can be written uniquely as
\begin{align}
\label{cdcomp}
 c=c_1c_2, \quad \text{$c_1, c_2$ primary and $c_1$ square-free}.
\end{align}
   It is shown in  \cite[Section 2.1]{G&Zhao2022-7} that $\chi_c \cdot \psi$ for any primary $c$ and $\psi \in \text{CG}$ is induced by a primitive Hecke character of conductor dividing $(1+i)^5c_1$. We shall henceforth denote $N(\chi_c \cdot \psi)$ for the norm of the conductor of the primitive Hecke character that induces $\chi_c \cdot \psi$.

\subsection{The approximate functional equation}
\label{sect: apprfcneqn}

  We apply \cite[Theorem 5.3]{iwakow} to arrive at an approximate functional equation for the above Hecke $L$-functions.
\begin{lemma}
\label{lem:AFE}
Let $H(s)$ be an entire, even function with rapid decay in the strip $|\Re(s)| \leq 10$ such that $H(0)=1$. Let $\chi=\chi_c \cdot \psi$ with $c$ primary, square-free and $\psi \in \text{CG}$. We have
\begin{align}
\label{fcneqnL}
\begin{split}
 L(\half+it,\chi)=\sum_{0 \neq \mathcal{A} \subset \mathcal{O}_K} & \frac{\chi(\mathcal{A})}{N(\mathcal{A})^{1/2+it}}G_{t} \left(  \frac {N(\mathcal{A})}{N(\chi)^{1/2}} \right) \\
 & + \left(\frac{N(\chi)}{\pi^2} \right)^{-it}
\frac{\Gamma\left(\thalf- it \right)}{\Gamma\left(\thalf + it \right)}\sum_{0 \neq \mathcal{A} \subset \mathcal{O}_K} \frac{\chi(\mathcal{A})}{N(\mathcal{A})^{1/2-it}}G_{-t} \left(  \frac {N(\mathcal{A})}{N(\chi)^{1/2}} \right),
\end{split}
\end{align}
  where for $\xi>0$, we define
\begin{align}
\label{defG}
 G_t(\xi) = \frac{1}{2 \pi i} \int\limits\limits_{(2)}  \frac {\Gamma(\frac{1}{2}+it+s)}{\Gamma(\frac{1}{2}+it)} (\pi\xi)^{-s}\frac{H(s)}{s} \dif s.
\end{align}
\end{lemma}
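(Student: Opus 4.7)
The plan is to apply Theorem~5.3 of \cite{iwakow} to the completed $L$-function attached to $\chi = \chi_c\cdot\psi$.  By the discussion in Section~\ref{sec2.4}, the hypothesis that $c$ is primary and square-free and $\psi\in\text{CG}$ ensures that $\chi$ is induced by a primitive Hecke character of trivial infinity type whose conductor has norm $N(\chi)$.  Since $K=\mq(i)$ has discriminant $-4$ and a single complex place, the completed $L$-function takes the shape
\begin{equation*}
\Lambda(s,\chi) = \lp\frac{\sqrt{N(\chi)}}{\pi}\rp^{s}\Gamma(s)L(s,\chi),
\end{equation*}
up to an irrelevant constant, and satisfies $\Lambda(s,\chi) = W(\chi)\Lambda(1-s,\overline{\chi})$ with $|W(\chi)|=1$. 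Because $\chi$ is real and $c$ is primary, one verifies that $W(\chi)=1$, so the functional equation reduces to $\Lambda(s,\chi)=\Lambda(1-s,\chi)$.

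Next I would run the standard Mellin--Barnes argument on
\begin{equation*}
I := \frac{1}{2\pi i}\int_{(2)} \Lambda\lp\half+it+s,\chi\rp \frac{H(s)}{s}\,\dif s.
\end{equation*}
Shifting the contour to $\Re(s)=-2$ is justified by the rapid decay of $H$ in vertical strips, Stirling's formula for $\Gamma$, and the polynomial growth of $L$ in the critical strip.  The only residue crossed is at the simple pole $s=0$ of $1/s$, and it contributes $\Lambda(\half+it,\chi)$.  On the original line $\Re(s)=2$ I would expand $L(\half+it+s,\chi)$ as a Dirichlet series over the integral ideals $\mathcal{A}\subset\mathcal{O}_K$, interchange summation and integration, and factor out $(\sqrt{N(\chi)}/\pi)^{1/2+it}\,\Gamma(\half+it)$.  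The remaining inner $s$-integral is precisely $G_t(N(\mathcal{A})/N(\chi)^{1/2})$ in the notation of \eqref{defG}, so this term produces the first sum in \eqref{fcneqnL}.

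For the contribution from the line $\Re(s)=-2$ I would invoke the functional equation $\Lambda(\half+it+s,\chi)=\Lambda(\half-it-s,\chi)$ and substitute $s\mapsto -s$, using that $H$ is even, so as to return to a contour on which the Dirichlet series for $L$ converges absolutely.  Expanding, interchanging, and again dividing through by $(\sqrt{N(\chi)}/\pi)^{1/2+it}\,\Gamma(\half+it)$ brings out the prefactor $\lp N(\chi)/\pi^2\rp^{-it}\Gamma(\half-it)/\Gamma(\half+it)$ and leaves behind exactly a Mellin integral of the shape $G_{-t}(N(\mathcal{A})/N(\chi)^{1/2})$.  Combining the two pieces yields \eqref{fcneqnL}.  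The only non-mechanical input is the verification that $W(\chi)=1$ for the real Hecke characters of the form $\chi_c\cdot\psi$ considered here; the remainder of the argument is bookkeeping around a single contour shift.
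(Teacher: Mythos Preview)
Your proof is correct and follows exactly the route the paper takes: the paper simply cites \cite[Theorem 5.3]{iwakow}, and you have spelled out the contour-shift argument behind that theorem together with the identification of the gamma factor and root number for real Hecke characters over $\mq(i)$. There is nothing to add.
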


 Here we note that it follows from \cite[Proposition 5.4]{iwakow} that for any $A>0$,
\begin{align*}
 G_t(\xi) \ll \left( 1+\frac {\xi}{1+|t|} \right)^{-A}.
\end{align*}

\subsection{The large sieve with quadratic symbols}

The following large sieve inequality for quadratic residue symbols is \cite[Theorem 1]{Onodera} and is the analogue in the Gaussian field to the well-known large sieve result of D. R. Heath-Brown \cite[Theorem 1]{DRHB} for quadratic Dirichlet characters.
\begin{lemma} \label{quartls}
Let $M,N$ be positive integers, and let $(a_n)$ be an arbitrary sequence of complex numbers, where $n$ runs over $\mathcal O_K$. Then we have for any $\varepsilon > 0$,
\begin{equation*}
 \sumstar_{\substack{m \in \mathcal O_K \\N(m) \leq M}} \left| \ \sumstar_{\substack{n \in \mathcal O_K \\N(n) \leq N}} a_n \leg{n}{m} \right|^2
 \ll_{\varepsilon} (MN)^{\varepsilon}(M + N) \sum_{N(n) \leq N} |a_n|^2,
\end{equation*}
  where the asterisks indicate that $m$ and $n$ run over square-free elements of $\mathcal O_K$ that are co-prime to $2$.
\end{lemma}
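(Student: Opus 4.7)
The plan is to imitate D. R. Heath-Brown's proof of the quadratic large sieve over $\mathbb{Z}$ in \cite[Theorem 1]{DRHB}, transported to the Gaussian integer setting. Opening the square and swapping orders of summation rewrites the left-hand side as
\begin{equation*}
\sum_{\substack{n,n' \\ N(n), N(n') \leq N}} a_n \overline{a_{n'}}\ \sumstar_{\substack{N(m) \leq M \\ (m, 2nn') = 1}} \leg{nn'}{m}.
\end{equation*}
I would factor $nn' = c d^2$ with $c$ square-free up to units. The pairs $(n,n')$ for which $c$ is a unit, i.e.\ those where $nn'$ is essentially a square, contribute the diagonal term: the inner $m$-sum is a positive quantity of size $O(M)$, and a Cauchy--Schwarz step in $(n,n')$ produces the bound $O(M^{1+\varepsilon}\sum_{N(n) \leq N}|a_n|^2)$ needed for the stated inequality.

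For the off-diagonal pairs, i.e.\ those with $c$ a non-unit square-free element, the task reduces to obtaining a uniform square-root cancellation bound
\begin{equation*}
T(c,M) := \sumstar_{\substack{N(m) \leq M \\ (m, 2c) = 1}} \leg{c}{m} \ll M^{1/2} N(c)^{\varepsilon}.
\end{equation*}
To this end, restrict $m$ to primary representatives (incurring only the bounded number of cases indexed by $U_K$), apply quadratic reciprocity over $\mathbb{Z}[i]$ to convert $\leg{c}{m}$ into $\leg{m}{c}$ up to a unit factor depending on $c \bmod (1+i)^3$, and remove the square-free constraint on $m$ by M\"obius inversion. This leaves an incomplete sum of a non-trivial Hecke character modulo (a divisor of) $(1+i)^k c$, which is controlled by the Polya--Vinogradov estimate over $K$. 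Substituting back and summing trivially in $c$ gives the off-diagonal contribution a bound of order $(MN)^\varepsilon N \sum_n |a_n|^2$, which already suffices in the regime $N \geq M$.

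The main obstacle is producing an estimate symmetric in $M$ and $N$, since the argument above only yields whichever of the two parameters is summed first. Heath-Brown's device for closing this gap is a recursion: apply Cauchy--Schwarz in $c$ to the quantity $T(c,M)$ and bound $\sum_c |T(c,M)|^2$ by another instance of the same large sieve inequality, but with the roles of the outer and inner variables interchanged and with $(M,N)$ replaced by a smaller dyadic scale; an induction on $M+N$ then closes the estimate with the desired $(M+N)$ factor. Adapting this recursive step to $\mathcal{O}_K$ requires careful bookkeeping of the reciprocity sign, of the finitely many residue classes modulo $(1+i)^k$ that appear when passing between $m$ and its primary associate (essentially the four elements of $\text{CG}$), and of the ray-class data of the induced Hecke characters $\leg{\cdot}{c}$; once these are handled uniformly in $c$, the induction proceeds essentially as in the rational case and yields the stated bound $(MN)^{\varepsilon}(M+N) \sum_{N(n) \leq N} |a_n|^2$.
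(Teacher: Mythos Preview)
The paper does not supply its own proof of this lemma; it simply quotes the result as \cite[Theorem 1]{Onodera}, the Gaussian-field analogue of Heath-Brown's quadratic large sieve. Your overall plan---open the square, separate diagonal from off-diagonal, and close the argument with Heath-Brown's recursive symmetrisation---is indeed the route Onodera takes, so at the strategic level you are aligned with the literature the paper invokes.

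There is, however, a genuine gap in your intermediate step. The bound
\[
T(c,M)=\sumstar_{\substack{N(m)\le M\\(m,2c)=1}}\leg{c}{m}\ll M^{1/2}N(c)^{\varepsilon}
\]
does \emph{not} follow from the P\'olya--Vinogradov inequality. After reciprocity and M\"obius inversion you are summing the character $\leg{\cdot}{c}$ over intervals of length $M/N(d)^2$; P\'olya--Vinogradov bounds each such sum only by $N(c)^{1/2+\varepsilon}$, and summing over $d$ with $N(d)^2\le M$ yields at best $T(c,M)\ll M^{1/2}N(c)^{1/4+\varepsilon}$ (or $N(c)^{1/2+\varepsilon}$ if one does not split). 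Feeding either of these back into the off-diagonal sum does not produce the $(MN)^{\varepsilon}N\sum|a_n|^2$ you claim; the extra power of $N(c)$, which can be as large as $N^2$, spoils the estimate.

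What actually drives Heath-Brown's (and Onodera's) argument is not P\'olya--Vinogradov but Poisson summation applied to a smoothed $m$-sum: one transforms $\sum_{m}\leg{c}{m}W(N(m)/M)$ into a dual sum of length roughly $N(c)/M$, weighted by quadratic Gauss sums of exact modulus $N(c)^{1/2}$. It is this exact duality, not a one-sided character-sum bound, that produces the recursion of the schematic form $B(M,N)\ll (MN)^{\varepsilon}\bigl(M+B(N,\,\text{shorter range})\bigr)$, which then iterates to $(M+N)(MN)^{\varepsilon}$. Your description of the recursive bookkeeping (primary representatives, the four characters in $\text{CG}$, reciprocity signs) is accurate, but the recursion only closes once the Poisson step replaces the P\'olya--Vinogradov step.
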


  As a consequence of Lemma \ref{quartls}, for two given sequences of complex numbers $(a_m), (b_n)$ of absolute values at most $1$, we have for any $\varepsilon >0$,
\begin{align}
\label{cor4toquartls}
  \sum_{\substack{m\leq M\\ m \odd}} \sum_{n \leq N} a_mb_n \left(\frac{n}{m}\right) \ll_{\varepsilon} (MN)^{\varepsilon}(MN^{1/2} + M^{1/2}N).
\end{align}

  The above is an analogue to \cite[Corollary 4]{DRHB} and can be derived from Lemma \ref{quartls} in a similar manner as \cite[Corollary 4]{DRHB} from \cite[Theorem 1]{DRHB}.  Additionally, applying \eqref{cor4toquartls} and following the arguments that lead to \cite[(15)]{Blomer11}, we see that if $\tilde{a}_m, \tilde{b}_m \ll N(m)^{-1/2+\varepsilon}$, then
\begin{equation}
\label{HBbilinear}
  \sum_{\substack{m\leq M\\ m \odd}} \sum_{n \leq N} \tilde{a}_m\tilde{b}_n \left(\frac{n}{m}\right) \ll_{\varepsilon} (M+N)^{1/2+\varepsilon}.
\end{equation}

  We also note the following consequence of \cite[Corollary 1.4]{BGL}, establishing an upper bound for the second moment of quadratic Hecke $L$-functions.
\begin{lemma}
\label{lem:2.3}
Suppose that $s$ is a complex number with $\Re(s) \geq \frac{1}{2}$ and that $|s-1|>\varepsilon>0$. Then for any $\psi \in \text{CG}$,
\begin{align}
\label{L4est}
\sumstar_{\substack{(m,2)=1 \\ N(m) \leq X}} |L(s,\chi_{m}\psi)|^2
\ll (X|s|)^{1+\varepsilon}.
\end{align}
\end{lemma}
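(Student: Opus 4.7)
The strategy is to combine the approximate functional equation (Lemma~\ref{lem:AFE}) with the large sieve of Lemma~\ref{quartls}, following the Gaussian-field analogue of the scheme used in \cite{Blomer11}. First I would reduce to the critical line: for $\Re(s) \geq 1+\varepsilon$ absolute convergence of the Euler product gives $|L(s,\chi_m\psi)| \ll 1$, hence $\sumstar_{N(m)\leq X}|L(s,\chi_m\psi)|^2 \ll X$, and the intermediate strip $1/2 \leq \Re(s) < 1+\varepsilon$ follows by the Phragm\'en--Lindel\"of convexity principle applied to the holomorphic function $s\mapsto\sumstar_{N(m)\leq X}|L(s,\chi_m\psi)|^2$, holomorphic away from $s=1$ (the hypothesis $|s-1|>\varepsilon$ handles the unique potential pole from the $m=1$ term when $\psi$ is trivial).

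On the line $s = 1/2 + it$, apply Lemma~\ref{lem:AFE} with $\chi = \chi_m\psi$ for $m$ primary and square-free. Since $N(\chi) \asymp N(m)$ and the imaginary-quadratic archimedean factor contributes $(1+|t|)^2$ to the analytic conductor, the rapid decay of $G_t$ truncates both Dirichlet sums in \eqref{fcneqnL} at $N(\mathcal{A}) \lesssim Y := N(m)^{1/2}(1+|t|)$. After a dyadic decomposition $N(m) \sim M$, I would decompose each ideal as $\mathcal{A} = k^2\ell$ with $\ell$ square-free, so that $\chi_m(\mathcal{A}) = \mathbf{1}[(k,m)=1]\,\chi_m(\ell)$, and use quadratic reciprocity in $\mathcal{O}_K$ for primary elements to identify $\chi_m(\ell) = \leg{\ell}{m}$. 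Separating the $k$-variable by Mellin-splitting the argument of $G_t$ (or by a Cauchy--Schwarz in $k$ with weight $N(k)^{-1-\varepsilon}$), an application of Lemma~\ref{quartls} to the inner $\ell$-sum yields the dyadic bound
\[
\sumstar_{\substack{N(m)\sim M\\ (m,2)=1}} |L(1/2+it,\chi_m\psi)|^2 \ll (M+Y)(MY)^\varepsilon \ll \bigl(M + M^{1/2}(1+|t|)\bigr)(M|t|)^\varepsilon.
\]
The dual term in \eqref{fcneqnL} is handled identically, the gamma ratio having modulus one on $\Re(s)=1/2$.

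Summing dyadically over $M \leq X$ gives $\bigl(X + X^{1/2}(1+|t|)\bigr)(X|t|)^\varepsilon$, which is $\ll (X|s|)^{1+\varepsilon}$ since $X^{1/2}(1+|t|) \leq X(1+|t|) \asymp X|s|$ for $X \geq 1$. The chief technical obstacle is the separation of $k$ from $\ell$ in the decomposition $\mathcal{A}=k^2\ell$: the constraint $(k,m)=1$ does not factor cleanly out of the outer sum over $m$, and the common weight $G_t$ couples $N(k)$ with $N(\ell)$, so the Mellin/Cauchy--Schwarz device is needed to isolate a square-free inner variable compatible with Lemma~\ref{quartls}, at the cost of a logarithmic factor absorbed in $\varepsilon$. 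This entire scheme is of course subsumed by the general framework of \cite[Corollary~1.4]{BGL}, which is the reference the authors directly invoke.
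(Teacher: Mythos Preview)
The paper does not give a proof of this lemma at all; it simply records it as a consequence of \cite[Corollary~1.4]{BGL}. Your sketch is the standard approximate-functional-equation-plus-large-sieve argument that underlies that reference, and the core of it (dyadic decomposition, square-free reduction $\mathcal{A}=k^2\ell$, application of Lemma~\ref{quartls} with $a_\ell\asymp N(\ell)^{-1/2}$, then summing $M\leq X$) is correct and gives the stated bound. So in spirit you are doing exactly what the cited source does, only spelled out.

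One genuine technical slip: your reduction to the critical line via Phragm\'en--Lindel\"of is not valid as written, because $s\mapsto\sumstar_{N(m)\leq X}|L(s,\chi_m\psi)|^2$ is not holomorphic (it involves $\overline{L(s,\cdot)}$). The standard remedy is simply to drop the reduction step and apply an approximate functional equation valid throughout $\Re(s)\geq 1/2$ (the version in Lemma~\ref{lem:AFE} is stated only at $s=1/2+it$, but the usual form, e.g.\ \cite[Theorem~5.3]{iwakow}, works in the strip with truncation at $N(\mathcal{A})\lesssim (N(\chi)|s|^2)^{1/2}$); the large-sieve step then goes through unchanged. Alternatively one can run Phragm\'en--Lindel\"of on the holomorphic function $s\mapsto\sumstar_m W(N(m)/X)L(s,\chi_m\psi)^2$ and recover the second moment by Cauchy--Schwarz, but this is more work than necessary. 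Apart from this point your outline is sound.
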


  We deduce from \eqref{L4est} and Cauchy's inequality that for any $\Re(s) \geq \frac{1}{2}$ and $|s-1|>\varepsilon>0$,
\begin{align}
\label{L1est}
\sumstar_{\substack{(m,2)=1 \\ N(m) \leq X}} |L(s,\chi_{m})|
\ll X^{1+\varepsilon} |s|^{1/2+\varepsilon}.
\end{align}

\subsection{Analytical Property of $Z(s,w;\psi,\psi')$}
\label{APZ}

    We define for $\psi', \psi'' \in \text{CG}$ and $m, n \in \mz$,
\begin{equation*}
 Z(s,w;\psi,m\psi'+n\psi'') :=mZ(s,w;\psi,\psi')+nZ(s,w;\psi,\psi'').
\end{equation*}
  Our next result quotes \cite[Proposition 3.2]{G&Zhao2022-7} for the analytical property of $Z(s,w;\psi,\psi')$.
\begin{proposition}
\label{ExtensionOfZ(s,w)}
	The functions $Z(s,w;\psi,\psi')$ have a meromorphic continuation to the whole $\mc^2$ with a polar line $s+w=3/2$. There is an additional polar line at $s=1$ with residue $\res_{(1,w)}Z(s,w;\psi,\psi')=\pi \zeta_2(2w)/8$ if and only if $\psi=\psi_1$, and an additional polar line $w=1$ with residue $\res_{(s,1)}Z(s,w;\psi,\psi')=\pi \zeta_2(2s)/8$ if and only if $\psi'=\psi_1$. \newline
	
	The functions $(s-1)(w-1)(s+w-3/2)Z(s,w;\psi,\psi')$ are polynomially bounded in vertical strips, in the sense that for any $C_1>0$, there is a constant $C_2>0$ such that $|(s-1)(w-1)(s+w-3/2)Z(s,w;\psi,\psi')| \ll ((1+\Im(s))(1+\Im(w)))^{C_2}$ whenever $|\Im(s)|, |\Im(w)| \leq C_1$. The functions satisfy the following functional equations:
\begin{align}
\label{AswAws}
 Z(s,w;\psi,\psi')=Z(w,s;\psi',\psi).
\end{align}
Moreover, if $\psi \neq \psi_1$, then
\begin{align}
\label{Zfcneqnpsinot1}
  Z(1-s,s+w-\half;\psi,\psi')=\pi^{1-2s}N(\psi)^{(2s-1)/2}\frac{\Gamma(s)}{\Gamma(1-s)}Z(s,w;\psi,\psi').
\end{align}
For $\psi =\psi_1$, we have
\begin{align}
\label{Zfcneqnpsi1}
\begin{split}
Z& (1-s, s+w-\half; \psi_1,\psi') \\
& = \tfrac 12 \pi^{1-2s}\frac{\Gamma(s)}{\Gamma(1-s)}  \Big ( \tfrac 12(1-2^{-(1-s)})(1-2^{-s})^{-1}Z(s,w;\psi_1,\psi'(\psi_1 +\psi_{i})(\psi_1+ \psi_{1+i})) \\
& \hspace*{1cm} +\tfrac 12(1+2^{-(1-s)})(1+2^{-s})^{-1}Z(s,w;\psi_1,\psi'(\psi_1 +\psi_{i})(\psi_1- \psi_{1+i})) +2^{2s-1}Z(s,w;\psi_1, \psi'(\psi_1-\psi_{i}) \Big ).
\end{split}
\end{align}
\end{proposition}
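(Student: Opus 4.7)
The plan is to start in the region of absolute convergence $\Re(s), \Re(w) > 1$ and to derive two basic identities---the swap symmetry \eqref{AswAws} and the $s$-direction functional equation---whose iteration provides the meromorphic continuation to $\mathbb{C}^2$ and determines the polar structure.

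The symmetry \eqref{AswAws} is immediate from the quadratic reciprocity law in $\mathbb{Z}[i]$: for primary coprime $m, n$ one has $(m/n) = (n/m)$ (no sign ambiguity appears because $-1$ is a square in $\mathbb{Z}[i]$), so interchanging $m \leftrightarrow n$ and $\psi \leftrightarrow \psi'$ in the double sum---using the symmetry of the prefactor $\zeta^{(2)}(2s+2w-1)$ in $s, w$---yields the identity directly in the region of absolute convergence, and then everywhere by continuation.

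To prove \eqref{Zfcneqnpsinot1} for $\psi \neq \psi_1$, I decompose each primary $m$ as $m = m_1 k^2$ with $m_1$ primary square-free, evaluate the $k$-sum as an Euler product, and apply Hecke's functional equation to the primitive $L$-function $L(s, \chi_{m_1}\psi)$, whose conductor has norm $N(\psi) N(m_1)$. The $N(m_1)^{1/2-s}$ gauss-sum factor combines with $N(m_1)^{-w}$ to give $N(m_1)^{-(s+w-1/2)}$, and the prefactor $\zeta^{(2)}(2s+2w-1)$ is designed so that the transformed series has the same shape with $(s, w)$ replaced by $(1-s, s+w-1/2)$, yielding \eqref{Zfcneqnpsinot1} after reassembly. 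The case $\psi = \psi_1$ leading to \eqref{Zfcneqnpsi1} is the technical heart and the main obstacle: since $\chi_m$ fails to be primitive at the prime $(1+i)$, one must rewrite $\chi_m$ as a linear combination of primitive Hecke characters of 2-adic conductors $(1+i)^j$ for $j \in \{0, 1, 2, 3\}$, using discrete Fourier inversion on the group $\text{CG}$. The coefficients $\tfrac12(1 \pm 2^{-(1-s)})(1 \pm 2^{-s})^{-1}$ and $2^{2s-1}$ in \eqref{Zfcneqnpsi1} appear as ratios of local Euler factors at $(1+i)$ before and after applying the completed functional equations, combined with the local root numbers at $(1+i)$; the combinations $\psi'(\psi_1 \pm \psi_i)(\psi_1 \pm \psi_{1+i})$ serve as projectors onto the appropriate residue classes modulo $(1+i)^5$.

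With the three identities in hand, iteration extends $Z$ meromorphically to all of $\mathbb{C}^2$: the group generated by $(s, w) \mapsto (w, s)$ and $(s, w) \mapsto (1-s, s+w-1/2)$ is finite, and its orbits cover the plane, so any point can be reached from the region of absolute convergence by a bounded number of applications of the identities. The polar line $s+w = 3/2$ emerges from the interaction of the $\zeta^{(2)}(2s+2w-1)$ prefactor with the inner $L$-function's functional equation after iteration, and is unconditional in the sense that $(s+w-3/2) Z$ extends holomorphically across this line. The conditional polar lines at $s = 1$ (when $\psi = \psi_1$) and $w = 1$ (when $\psi' = \psi_1$) come from the $m = 1$ term of the original series, which contributes $\zeta_K^{(2)}(s)$ (respectively $\zeta_K^{(2)}(w)$) with a simple pole of residue $\pi/8$ at $s = 1$ (being $\pi/4$, the residue of $\zeta_K(s)$ at $s = 1$, times the Euler factor $1 - 1/2$ at $(1+i)$); combining with $\zeta_K^{(2)}(2w)$ gives the stated residue $\pi\zeta_2(2w)/8$. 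Polynomial boundedness of $(s-1)(w-1)(s+w-3/2)Z(s,w;\psi,\psi')$ in vertical strips follows from the Phragmén--Lindel\"of principle, applied between the region of absolute convergence (where the bound is trivial) and its image under the iterated functional equations, where the gamma-factor estimate $|\Gamma(s)/\Gamma(1-s)| \ll (1+|\Im s|)^{2\Re(s)-1}$ provides a polynomial bound.
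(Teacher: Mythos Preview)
The paper does not itself contain a proof of this proposition: it is quoted verbatim from \cite[Proposition 3.2]{G\&Zhao2022-7}, so there is no in-paper argument to compare against. Your sketch is the standard route for such statements (in the tradition of Friedberg--Hoffstein--Lieman, Diaconu--Goldfeld--Hoffstein, and Blomer), and is almost certainly what the cited reference does: quadratic reciprocity in $\mathbb{Z}[i]$ for \eqref{AswAws}; the square-free decomposition $m=m_1k^2$ combined with Hecke's functional equation for $L(s,\chi_{m_1}\psi)$ to produce the $s$-direction identity; and Fourier inversion over $\text{CG}$ to handle the imprimitivity at $(1+i)$ when $\psi=\psi_1$.

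One point in your sketch is imprecise. You write that the orbit of the finite group generated by $(s,w)\mapsto(w,s)$ and $(s,w)\mapsto(1-s,s+w-\tfrac12)$ ``covers the plane''; but the images of the initial region of holomorphy under this (order-$12$) group generally leave an uncovered tube domain $D+i\mathbb{R}^2$ with $D\subset\mathbb{R}^2$ bounded. One then needs a convexity input---Bochner's tube theorem, or an equivalent Hartogs/Cauchy-integral argument---to fill in this remaining region. This step is routine in the double Dirichlet series literature but is not automatic from the group action alone, so you should name it explicitly. With that addition, and the observation that the polynomial bound in vertical strips follows by Phragm\'en--Lindel\"of exactly as you say, your outline is correct.
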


  To facilitate our treatments in the sequel, we rewrite the functional equations \eqref{Zfcneqnpsinot1} and \eqref{Zfcneqnpsi1} by a successive change of variables: $s \rightarrow 1-s$ and then  $w \rightarrow w+s-1/2$ to arrive at
\begin{align*}
\begin{split}
  Z(s,w;\psi,\psi')=& \pi^{2s-1}N(\psi)^{(1-2s)/2} \frac{\Gamma(1-s)}{\Gamma(s)} Z(1-s,s+w-1/2;\psi,\psi'), \quad \psi \neq \psi_1, \\
  Z(s,w;\psi_1,\psi') =& \tfrac 12 \pi^{2s-1} \frac{\Gamma(1-s)}{\Gamma(s)}  \Big ( \tfrac 12(1-2^{-s})(1-2^{-(1-s)})^{-1}Z(1-s,s+w- \tfrac{1}{2};\psi_1,\psi'(\psi_1 +\psi_{i})(\psi_1+ \psi_{1+i})) \\
& \hspace*{1cm} +\tfrac 12(1+2^{-s})(1+2^{-(1-s)})^{-1}Z(1-s,s+w-\tfrac{1}{2};\psi_1,\psi'(\psi_1 +\psi_{i})(\psi_1- \psi_{1+i})) \\
& \hspace*{1cm} +2^{1-2s}Z(1-s,s+w-\tfrac{1}{2};\psi_1, \psi'(\psi_1-\psi_{i}) \Big ).
\end{split}
\end{align*}

  The above allows us to recast the functional equations \eqref{Zfcneqnpsinot1} and \eqref{Zfcneqnpsi1} in a uniform way such that for some absolute constants $\alpha^{(j)}_{\rho, \rho', \psi, \psi' }$,
\begin{align}
\label{pracfunct}
\begin{split}
  Z(s, w; \psi, \psi')&  = \sum_{j = -5}^2 \sum_{\rho, \rho' \in \text{CG}} \frac{\alpha^{(j)}_{\rho, \rho', \psi, \psi'} 2^{js}\pi^{2s-1}}{4^s-4} \frac{\Gamma(1-s)}{\Gamma(s)} Z(1-s,s+w- \tfrac{1}{2}; \rho, \rho').
  \end{split}
\end{align}

   We now use \eqref{AswAws} to write $Z(1-s,s+w-1/2; \rho, \rho')$ as $Z(s+w-1/2,1-s; \rho', \rho)$ and then apply \eqref{pracfunct} to recast the latter as sums over $Z(3/2-s-w, w; \rho, \rho')=Z(w, 3/2-s-w; \rho', \rho)$.  By  \eqref{pracfunct} again, we further express $Z(w, 3/2-s-w; \rho', \rho)$ as sums over $Z(1-w, 1-s; \rho', \rho)=Z(1-s, 1-w; \rho, \rho')$. After the above process, we see that there exist some absolute constants $\alpha^{(j_1,j_2)}_{\rho, \rho', \psi, \psi' }$ such that
\begin{equation}
\label{finalfunct}
\begin{split}
  Z(s, & w; \psi, \psi')\\
  & = \sum_{j_1, j_2 = -10}^4 \sum_{\rho, \rho' \in \text{CG}} \frac{\alpha^{(j_1,j_2)}_{\rho, \rho', \psi, \psi' } 2^{j_1s + j_2w}\pi^{4s+4w-4}}{(4^s-4)(4^{s+w-1/2}-4)(4^w - 4)}  \frac{\Gamma(1-s)}{\Gamma(s)} \frac{\Gamma(\tfrac{3}{2}-s-w)}{\Gamma(s+w-\tfrac{1}{2})}  \frac{\Gamma(1-w)}{\Gamma(w)}   Z(1-s, 1-w; \rho, \rho').
  \end{split}
\end{equation}

    Notice that the above functional equation implies that the expression
\begin{displaymath}
 \Gamma\left(s\right)\Gamma\left( s+w-\tfrac{1}{2} \right)\Gamma\left(w \right) Z(s, w; \psi, \psi')
\end{displaymath}
  is roughly invariant under $(s, w) \mapsto (1-s, 1-w)$ and therefore justifies in part our choice of the analytical conductor $C(u,t)$ in \eqref{anacon}.

\section{Proof of Theorem \ref{ThmZbound}}
\label{sec Poisson}

\subsection{An approximate functional equation for  $Z(1/2+it , 1/2 + iu; \psi, \psi')$ }
\label{sec: first approxfcneqn}

   We fix real numbers $u, t$ and we define
 \begin{equation*}
  U := (\tfrac 12+|u|)^2, \quad  T := (\tfrac 12+|t|)^2, \quad S := (\tfrac 12 + |u+t|)^2, \quad C:=SU,  \quad X := STU.
\end{equation*}

  In this section, we establish an approximate functional equation for  $Z(1/2+it, 1/2+iu; \psi, \psi')$.
\begin{lemma}
\label{lemZapproxfcneqn}
There are absolutely bounded constants $\lambda^{\pm}_{j, \rho, \rho', \psi, \psi'}(u, t)$ and a smooth, rapidly decaying function $V$  such that for any $\varepsilon > 0$  and any $C' \geq C^{1/2+\varepsilon}$,
\begin{align*}
\begin{split}
 Z(\tfrac{1}{2}+it , \tfrac{1}{2} + iu; \psi, \psi') =  \sum_{\rho, \rho'} \sum_{j =-6}^{10} \sum_{\pm}\lambda^{\pm}_{j, \rho, \rho', \psi, \psi'}(u, t) & \sum_{\substack{d, m \odd \\ N(dm^2) \leq C'}} \frac{L^{(2)}(1/2, \chi_d \rho) \rho'(d)}{N(d)^{1/2\pm iu}N(m)^{1\pm 2i(u+t)}} V\left(\frac{N(dm^2)}{2^j\sqrt{C}}\right) \\
  & + O\left((TC)^{1/4+\varepsilon}\min(S, U)^{-1/2}\right).
  \end{split}
\end{align*}
 \end{lemma}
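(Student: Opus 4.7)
The plan is to establish this approximate functional equation by combining Mellin inversion with the functional equation \eqref{finalfunct} of $Z$. Let $H(\xi)$ be an even entire function with $H(0)=1$ and rapid decay in vertical strips, and for a parameter $Y>0$ to be chosen, I consider the Mellin integral
\[
\mathcal I(Y) := \frac{1}{2\pi i}\int_{(c)} Z\bigl(\tfrac{1}{2}+it+\xi, \tfrac{1}{2}+iu; \psi, \psi'\bigr) H(\xi) Y^\xi \frac{d\xi}{\xi},
\]
taken along $\Re(\xi)=c$ for some $c>1$. Shifting to $\Re(\xi)=-c$ through $\xi=0$, Cauchy's theorem gives
\[
Z(\tfrac{1}{2}+it, \tfrac{1}{2}+iu; \psi, \psi') = \mathcal I(Y) - \mathcal I_{-c}(Y) - R(Y),
\]
where $R(Y)$ collects residues from the polar lines $s+\xi=1$ (only when $\psi=\psi_1$) and $s+\xi+w=3/2$; these I would bound using Lemma~\ref{lem:2.3} and \eqref{L1est}.

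For $\mathcal I(Y)$: on $\Re(\xi)=c$ the Dirichlet series of $Z$ converges absolutely, so I expand $Z$ via its defining series and further Dirichlet-expand $\zeta^{(2)}(2s+2\xi+2w-1)$; swapping sum and integration, the $\xi$-integrals produce a smooth cutoff of the form $V(N(dm^2)/Y)$, which after the dyadic choice $Y=2^j\sqrt{C}$ matches the ``$+$''-sign terms. For $\mathcal I_{-c}(Y)$: apply \eqref{finalfunct} to reflect $Z(\tfrac{1}{2}+it+\xi, \tfrac{1}{2}+iu; \psi, \psi')$ into gamma-weighted combinations of $Z(\tfrac{1}{2}-it-\xi, \tfrac{1}{2}-iu; \rho, \rho')$; since this reflected $Z$ is not in the region of absolute convergence in its $w$-slot, I would introduce a second Mellin variable $\eta$ in the $w$-direction, or equivalently use \eqref{AswAws} together with another application of \eqref{pracfunct}, to bring both variables into the convergence region. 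Re-expanding as a Dirichlet series and re-integrating then yields the ``$-$''-sign terms. The finite range $j \in \{-6, \ldots, 10\}$ emerges because the rapid decay of $H$ and $V$ makes the contributions of $|j|$ outside this range negligible.

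The coefficients $\lambda^{\pm}_{j,\rho,\rho',\psi,\psi'}(u,t)$ come from Stirling analysis of the triple gamma ratio $\Gamma(1-s)\Gamma(\tfrac{3}{2}-s-w)\Gamma(1-w)/\bigl(\Gamma(s)\Gamma(s+w-\tfrac{1}{2})\Gamma(w)\bigr)$ appearing in \eqref{finalfunct} along $\Re(\xi)=-c$: the oscillatory phase is extracted into the explicit factors $N(d)^{\pm iu}$ and $N(m)^{\pm 2i(u+t)}$, while the residual amplitude forms the absolutely bounded $\lambda^{\pm}$. The error term $O\bigl((TC)^{1/4+\varepsilon}\min(S,U)^{-1/2}\bigr)$ then absorbs the polar residues $R(Y)$, the Stirling remainders, and the smooth-cutoff tails beyond $N(dm^2)>C'$. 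The principal obstacle will be the Stirling book-keeping required to present the final sum in the clean target form with $L^{(2)}$ at the central point $1/2$ and the $(u,t)$-dependence displayed solely through the explicit oscillatory exponents; a secondary point is controlling the residues $R(Y)$ within the claimed error budget via the mean-value bounds of Lemma~\ref{lem:2.3} and \eqref{L1est}.
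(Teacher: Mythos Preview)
Your proposal has the right broad shape---Mellin shift, functional equation, re-expand---but three concrete points are handled incorrectly and would prevent the argument from closing.

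\medskip
\textbf{Wrong Mellin variable.} You shift in the $s$-slot, writing $Z(\tfrac12+it+\xi,\tfrac12+iu;\psi,\psi')$ on $\Re\xi=c$. But with $\Re w=\tfrac12$ the double Dirichlet series \eqref{defZ} does \emph{not} converge absolutely (the $m$-sum is $\sum N(m)^{-1/2}$), so you cannot ``expand $Z$ via its defining series'' there. The paper instead shifts in the $w$-slot, integrating $Z(\tfrac12+it,\tfrac12+iu+z;\psi,\psi')$ against a weight on $\Re z=1$, where the series does converge; this also delivers $L^{(2)}(\tfrac12+it,\chi_d\rho)$ directly, with no need to massage the point of evaluation afterwards.

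\medskip
\textbf{Missing the $F_{u,t}$ device.} The heart of the paper's proof is a carefully designed multiplier
\[
F_{u,t}(z)=\tfrac12 C^{-z/2}\frac{\Gamma(\tfrac12-iu)\Gamma(\tfrac12-i(u+t))}{\Gamma(\tfrac12+iu)\Gamma(\tfrac12+i(u+t))}\cdot\frac{\Gamma(\tfrac12+iu+z)\Gamma(\tfrac12+i(u+t)+z)}{\Gamma(\tfrac12-iu-z)\Gamma(\tfrac12-i(u+t)-z)}+\tfrac12 C^{z/2},
\]
together with extra factors $(2^{\cdots+z}-1)$ and $(4^{\cdots+z}-4)$ to kill the poles of $Z$. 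The point is the identity
\[
\frac{\Gamma(\tfrac12-iu+z)\Gamma(\tfrac12-i(u+t)+z)}{\Gamma(\tfrac12+iu-z)\Gamma(\tfrac12+i(u+t)-z)}\,F_{u,t}(-z)
=\frac{\Gamma(\tfrac12-iu)\Gamma(\tfrac12-i(u+t))}{\Gamma(\tfrac12+iu)\Gamma(\tfrac12+i(u+t))}\,F_{-u,-t}(z),
\]
which, after applying \eqref{finalfunct} and $z\mapsto -z$, converts the reflected integral into one of exactly the same shape with $(u,t)\mapsto(-u,-t)$. This is what produces the clean $\pm$ structure and the absolutely bounded constants $\lambda^{\pm}$: they are exact combinations of the $\alpha^{(j_1,j_2)}$ from \eqref{finalfunct} and the unit-modulus $\Gamma$-ratios above, not Stirling amplitudes. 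Your plan to extract the $\lambda^{\pm}$ by ``Stirling analysis of the triple gamma ratio'' would leave a $z$-dependent amplitude that you could not separate from the weight function.

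\medskip
\textbf{Source of the error term.} The error $O\bigl((TC)^{1/4+\varepsilon}\min(S,U)^{-1/2}\bigr)$ does not come from polar residues or Stirling remainders. After the previous step one has weight functions $V_{\pm u,\pm t}(\xi)=\frac{1}{2\pi i}\int_{(1)}C^{-z/2}F_{\pm u,\pm t}(z)\pi^{-2z}H(z)\xi^{-z}\frac{dz}{z}$ that still depend on $u,t$. The paper replaces these by the fixed $V(\xi)=\frac{1}{2\pi i}\int_{(1)}\pi^{-2z}H(z)\xi^{-z}\frac{dz}{z}$ using a Harcos-type bound $C^{-iy/2}F_{u,t}(iy)-1\ll |y|\,C^{\varepsilon}\min(S,U)^{-1/2}$ on $\Re z=0$; multiplying by the first-moment bound \eqref{L1est} over $N(dm^2)\le C^{1/2+\varepsilon}$ produces exactly the stated error. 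Your residue contributions $R(Y)$, by contrast, would be of size $Y^{1/2}U^{-A}$ or similar and do not account for the $\min(S,U)^{-1/2}$.

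\medskip
Finally, the range $-6\le j\le 10$ is not obtained by truncating a decay tail; it is the literal range of powers of $2$ arising from the coefficients $\alpha^{(j_1,j_2)}$ in \eqref{finalfunct} combined with the pole-cancelling factors.
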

\begin{proof}
  Let $H(s)=e^{s^2}$ so that it is an even, holomorphic function satisfying $H(0) = 1$ and is of rapid decay on vertical strips. We consider the integral
\begin{equation}
\label{afe1}
\begin{split}
  \frac{1}{2\pi i} \int\limits_{(1)}& \frac{(2^{\frac{1}{2}+iu+z}-1)(2^{\frac{1}{2}+i(u+t)+z}-1)}{(2^{\frac{1}{2}+iu}-1)(2^{\frac{1}{2}+i(u+t)}-1)}
  \frac{(4^{\frac{1}{2} + iu + z} - 4)(4^{\frac{1}{2} + i(u+t) + z} - 4)}{(4^{\frac{1}{2}+iu}-4)(4^{\frac{1}{2}+i(u+t)}-4)}  Z(\tfrac{1}{2}+it, \tfrac{1}{2} + iu + z; \psi, \psi') F_{u, t}(z) H(z) \frac{\dif z}{z},
  \end{split}
\end{equation}
   where
\begin{displaymath}
  F_{u, t}(z) = \frac{1}{2}C^{-z/2} \frac{\Gamma(\frac{1}{2} - iu)\Gamma(\frac{1}{2} - i(u+t))}{\Gamma(\frac{1}{2} + iu)\Gamma(\frac{1}{2} + i(u+t))} \frac{\Gamma(\frac{1}{2} + iu+z)\Gamma(\frac{1}{2} + i(u+t)+z)}{\Gamma(\frac{1}{2} - iu-z)\Gamma(\frac{1}{2} - i(u+t)-z)}  +  \frac{1}{2}C^{z/2}.
\end{displaymath}
  Notice that the possible poles at $z=-1/2 - iu$, $z = -1/2-iu-it$ of $F_{u, t}$ and the possible poles at $z=1/2 - iu$, $z = 1/2-iu-it$ of $Z$ are cancelled out by the two fractions appearing in the integrand of \eqref{afe1}.  Also Stirling's formula (see \cite[(5.112)]{iwakow}) asserts that for bounded $\Re(s)$,
\begin{align}
\label{Stirling}
  \Gamma(s) \ll |s|^{\Re(s)-1/2}e^{-\pi|\Im(s)|/2}.
\end{align}
  This implies that
\begin{align}
\label{Stirlingratio}
  \frac {\Gamma(1-s)}{\Gamma (s)} \ll (1+|s|)^{1-2\Re (s)}.
\end{align}
  It follows from this that $F_{u, t}$ is of moderate growth in vertical strips. Thus by the rapid decay of $H(s)$ on vertical strips, we shift the contour of integration in \eqref{afe1} to $\Re z = -1$ to pick up a pole at $z=0$, whose residue contributes (by noticing that $F_{u, t}(0) = 1$)
\begin{equation}
\label{afe2}
    Z( \tfrac{1}{2}+it, \tfrac{1}{2} + iu; \psi, \psi').
\end{equation}
 For the integral on the new line, we apply the functional equation \eqref{finalfunct} and make a change of variables $z \mapsto -z$ to get
\begin{equation}\label{afe3}
\begin{split}
- \frac{1}{2\pi i} \int\limits_{(1)}  & \frac{(2^{\frac{1}{2}+iu-z}-1)(2^{\frac{1}{2}+i(u+t)-z}-1)}{(2^{\frac{1}{2}+iu}-1)(2^{\frac{1}{2}+i(u+t)}-1)}
  \sum_{j_1, j_2 = -10}^4 \sum_{\rho, \rho' \in \text{CG}}  \alpha^{(j_1, j_2)}_{ \rho, \rho', \psi, \psi' }  \frac{2^{j_1(\frac{1}{2}+it) + j_2(\frac{1}{2} + iu - z)}\pi^{2i(u+t)-2z}}{(4^{\frac{1}{2}+it}-4)(4^{\frac{1}{2}+iu}-4)(4^{\frac{1}{2}+i(u+t)}-4)} \\
  & \times \frac{\Gamma(\frac{1}{2} - it)\Gamma(\frac{1}{2} - i(u+t) + z)\Gamma(\frac{1}{2} - iu + z)}{\Gamma(\frac{1}{2} + it )\Gamma(\frac{1}{2} + i(u+t) -z )\Gamma(\frac{1}{2} + iu -z )} Z( \tfrac{1}{2}-it , \tfrac{1}{2} - iu + z; \rho, \rho') F_{u, t}(-z) H(z) \frac{\dif z}{z}.
\end{split}
\end{equation}
 It follows that the expression in \eqref{afe1} equals the sum of \eqref{afe2} and \eqref{afe3}. Further using the relation
\begin{displaymath}
   \frac{\Gamma(\frac{1}{2} - iu + z)\Gamma(\frac{1}{2} - i(u+t) + z)}{\Gamma(\frac{1}{2} + iu -z )\Gamma(\frac{1}{2} + i(u+t) -z )} F_{u, t}(-z)   = \frac{\Gamma(\frac{1}{2} - iu)\Gamma(\frac{1}{2} - i(u+t))}{\Gamma(\frac{1}{2}+iu)\Gamma(\frac{1}{2}+i(u+t))} F_{-u, -t}(z),
\end{displaymath}
  we get that for certain absolutely bounded complex numbers $\mu^{(j)}_{ \rho, \rho', \psi, \psi' }(u, t)$, the expression in \eqref{afe3} equals
\begin{equation}\label{afe4}
  \begin{split}
 & - \frac{1}{2\pi i} \int\limits_{(1)}   \sum_{j = -10}^6 \sum_{\rho, \rho' \in \text{CG}} \mu^{(j)}_{ \rho, \rho', \psi, \psi' }(u, t) 2^{-jz} \pi^{-2z}     Z(1/2-it, 1/2 - iu + z; \rho, \rho') F_{-u, -t}(z) H(z) \frac{\dif z}{z}.
\end{split}
\end{equation}

  We now apply \eqref{defZ} to expand the function $Z$ appearing in \eqref{afe1} and \eqref{afe4} into Dirichlet series to see that
for some absolutely bounded constants $\mu^{\pm, j}_{ \rho, \rho', \psi, \psi'}(u, t) \in \mc$,
\begin{equation*}
\begin{split}
  Z(\tfrac{1}{2}+it, \tfrac{1}{2}+iu; \psi, \psi') & =  \sum_{\rho, \rho'\in \text{CG} } \sum_{j =-10}^6  \sum_{\pm} \mu^{\pm, j}_{ \rho, \rho', \psi, \psi'}(u, t)\sum_{d, m \odd} \frac{L^{(2)}(\tfrac{1}{2}+it, \chi_d \rho) \rho'(d)}{N(d)^{\frac{1}{2} \pm iu}N(m)^{1\pm 2i(u+t)}} V_{\pm u, \pm t }\left(\frac{2^jN(dm^2)}{\sqrt{C}}\right),
    \end{split}
\end{equation*}
   where
 \begin{equation}
\label{defV}
\begin{split}
  V_{u, t}(\xi) = \frac{1}{2 \pi i} \int\limits_{(1)} C^{-z/2} F_{ u, t}(z)\pi^{-2z}  H(z) \xi^{-z} \frac{\dif z}{z}.
\end{split}
\end{equation}

 Note that it follows from \eqref{Stirlingratio} that we have uniformly for $u,t$,
\begin{equation}
\label{Ha1}
  C^{-z/2} F_{u, t}(z)  \ll (1+|z|)^{4 \Re z}, \quad \Re z \geq 2.
\end{equation}

 The above and the rapid decay of $H$ on vertical strips and \eqref{Ha1} allows us to shift the contour of integration in \eqref{defV} to $\Re(z)=A$ for any $A>2$.  We deduce from this that uniformly in $u$ and $t$,
\begin{displaymath}
  V_{u, t}(\xi) \ll \xi^{-A}.
\end{displaymath}
The above estimation together with \eqref{L1est} implies that for any $\varepsilon > 0$,
\begin{equation}
\label{linearcombi1}
\begin{split}
  Z(\tfrac{1}{2}+it, \tfrac{1}{2}+iu; \psi, \psi')  =  & \sum_{\rho, \rho'\in \text{CG}} \sum_{j =-10}^6 \sum_{\pm} \mu^{\pm, j}_{\rho, \rho', \psi, \psi'}(u, t)\\
  & \hspace*{0.5cm} \times \sum_{\substack{d, m \odd \\ N(dm^2) \leq C^{1/2+\varepsilon}}} \frac{L^{(2)}( \tfrac{1}{2}+it, \chi_d \rho) \rho'(d)}{N(d)^{\frac{1}{2} \pm iu}N(m)^{1\pm 2i(u+t)}} V_{\pm u, \pm t }\left(\frac{2^jN(dm^2)}{ \sqrt{C}}\right) +O\left(T^{1/4+\varepsilon}C^{-A}\right).
  \end{split}
\end{equation}

  Now we want to remove the dependence on $u$ and $t$ of $V_{u, t}$.  To this end, we set
\begin{equation*}
   V (\xi) = \frac{1}{2 \pi i} \int\limits_{(1)}  \pi^{-2z} H(z) \xi^{-z} \frac{\dif z}{z}.
\end{equation*}
 We consider for $\xi \ll C^{\varepsilon}$ the difference
\begin{equation}
\label{diff}
\begin{split}
 & V_{u, t}(\xi) - V(\xi)= \frac{1}{2 \pi i} \int\limits_{(1)}  \left( C^{-z/2} F_{u, t}(z)- 1\right )
  \pi^{-2z} H(z) \xi^{-z} \frac{\dif z}{z}.
  \end{split}
\end{equation}
  Note that the integrand is now holomorphic at $z=0$ and that $V$ is a smooth function that decays rapidly on vertical strips.  Thus we may shift the contour of integration in \eqref{diff} to $\Re(z)=0$. The contribution of the portion of the integral with $|\Im z| > C^{\varepsilon}$ on the new line is $O(C^{-A})$ by \eqref{Ha1} and the rapid decay of $H$ on vertical strips.  For the complement, we note that by a variant of  \cite[Lemma 4.1]{Ha} we have uniformly in $u$ and $t$ that for any $0 < \varepsilon < 1/2$,
\begin{equation} \label{Ha2}
  C^{-iy/2} F_{u, t}(iy) - 1 \ll |y| C^{\varepsilon}\min(S, U)^{-1/2}, \quad y \in \mr,  \ \ |y| < \min \left( \frac {\min(S, U)^{1/2}}{2}, C^{\varepsilon} \right).
\end{equation}

  If $\tfrac{1}{2} \min(S, U)^{1/2} \geq C^{\varepsilon}$, then we may use the above estimation for the remaining integral to see that it is bounded by $O(C^{\varepsilon}\min(S, U)^{-1/2})$. Otherwise, we divide the remaining integral into one over $\Im (z) \leq \tfrac{1}{2} \min(S, U)^{1/2}$ and another $\tfrac{1}{2} \min(S, U)^{1/2} < \Im (z) < C^{\varepsilon}$.  The bound in \eqref{Ha2} gives that the former is bounded by $O(C^{\varepsilon}\min(S, U)^{-1/2})$.  For the latter, the trivial bound $ C^{-iy/2} F_{u, t}(iy) - 1 \ll 1$ and the rapid decay of $H(z)$ renders that it is bounded by $O(\min(S, U)^{-1/2})$. \newline

  We then conclude from the above discussions and \eqref{L1est} that we may replace the weight function $V_{u, t}$ in \eqref{linearcombi1} by $V$ with an error
\begin{displaymath}
  C^{\varepsilon}\min(S, U)^{-1/2} \sum_{N(dm^2) \leq C^{\frac{1}{2} + \varepsilon}} \frac{|L^{(2)}( \tfrac{1}{2}+it, \chi_d \rho)|}{N(dm^2)^{1/2}}  \ll  (TC)^{1/4+\varepsilon}\min(S, U)^{-1/2}.
\end{displaymath}
  This establishes the lemma with $C' = C^{1/2 +\varepsilon}$. The rapid decay of $V$ on vertical strips then implies that the lemma holds for any larger $C'$. This completes the proof.
\end{proof}

\subsection{Estimations for  $D_{\psi, \psi'}(t, u, P; W)$ }
 We deduce from Lemma \ref{lemZapproxfcneqn} that in order to estimate $Z(1/2+it, 1/2+iu; \psi, \psi')$, it suffices to bound
\begin{displaymath}
\sum_{\substack{d, m \odd \\ N(dm^2) \leq C'}} \frac{L^{(2)}(\tfrac{1}{2}+it, \chi_d \rho) \rho'(d)}{N(d)^{1/2\pm iu}N(m)^{1\pm 2i(u+t)}} V\left(\frac{2^jN(dm^2)}{\sqrt{C}}\right).
\end{displaymath}
 We further apply a smooth partition of unity to see that it remains to bound
\begin{displaymath}
  D_{\psi, \psi'}(t, u, P; W) := \sum_{d, m \odd} \frac{L^{(2)}(\tfrac{1}{2} + it, \chi_d\psi) \psi'(d)}{N(d)^{1/2 + iu}N(m)^{1+2i(u+t)}} W\left(\frac{N(dm^2)}{P}\right),
\end{displaymath}
 where $W$ is a smooth function supported on $[1, 2]$ and
\begin{equation}\label{defP}
  1\leq P \leq (US)^{1/2+\varepsilon}.
\end{equation}
  In what follows, we shall always assume that $P$ satisfies \eqref{defP}. Our next result gives a majorant of $D_{\psi, \psi'}(t, u, P; W)$.
\begin{lemma}
\label{Dest1}
With the notation as above and let $T' \geq TX^{\varepsilon}$. We have for any $A>2$,
\[ D_{\psi,  \psi'} (t, u, P; W) \ll \mathcal{R} + P^{-A} , \]
where
\[ \mathcal{R} = \sum_{\pm} \sum_{N(m) \leq P^{1/2+\varepsilon}} \frac{(XN(m))^{\varepsilon}}{N(m)}  \int\limits_{\varepsilon-iX^{\varepsilon}}^{\varepsilon+iX^{\varepsilon}} \int\limits_{\varepsilon-iX^{\varepsilon}}^{\varepsilon+iX^{\varepsilon}}  \Biggl|  \sum_{\substack{N(d_0) \leq N(m)^{-2}P^{1+\varepsilon}\\  d_0 \odd}}\sum_{N(n) \leq (TP)^{1/2+\varepsilon}} \frac{(\chi_{d_0}\psi)(n) \psi'(d_0) N(d_0\psi)^{s/2}}{N(n)^{1/2 \pm it-s}N(d_0)^{1/2 + iu-w}}    \Biggr| \, |\dif w|\, |\dif s| . \]
\end{lemma}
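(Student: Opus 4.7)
The plan is to open up the inner $L$-value $L^{(2)}(\tfrac12+it,\chi_d\psi)$ via the approximate functional equation of Lemma~\ref{lem:AFE}, then Mellin-invert both the weight $G_{\pm t}$ and the smooth cutoff $W$ to expose the two contour integrals over $s$ and $w$ featured in $\mathcal{R}$.

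First, I apply Lemma~\ref{lem:AFE} to write
\[
L^{(2)}(\tfrac12+it,\chi_d\psi)=\sum_{\pm}c_\pm(d,t)\sum_n\frac{(\chi_d\psi)(n)}{N(n)^{1/2\pm it}}G_{\pm t}\!\left(\frac{N(n)}{N(\chi_d\psi)^{1/2}}\right),
\]
where $c_+\equiv 1$ and $c_-(d,t)=(N(\chi_d\psi)/\pi^2)^{-it}\Gamma(\tfrac12-it)/\Gamma(\tfrac12+it)$ has absolute value $1$. Since the support of $W$ restricts $N(d)\leq 2P/N(m)^2$, the rapid decay $G_{\pm t}(\xi)\ll(1+\xi/(1+|t|))^{-A}$ permits truncating the $n$-sum to $N(n)\leq(TP)^{1/2+\varepsilon}$ with tail error $O(P^{-A})$ after summing in $d,m$ (using \eqref{L1est} if needed to bound residual $L$-values).

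Second, I insert the Mellin formula \eqref{defG} for $G_{\pm t}$ on $\Re(s)=\varepsilon$, using the rapid decay of $H(s)=e^{s^2}$ on vertical strips to truncate $|\Im s|\leq X^\varepsilon$ with error $O(P^{-A})$. Analogously, I Mellin-invert $W(N(dm^2)/P)$ on $\Re(w)=\varepsilon$ to decouple $d$ and $m$, again truncating $|\Im w|\leq X^\varepsilon$. Because the compact support of $W$ makes the inner $d$-sum already finite (bounded by $N(d)\leq N(m)^{-2}P^{1+\varepsilon}$) prior to Mellin inversion, the subsequent exchange of summation and integration is elementary; a sign-flipping substitution $s\mapsto -s$, $w\mapsto -w$ then realigns the integrand to the shape featured in $\mathcal{R}$.

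Finally, I pass absolute values through both integrals and pull the sum over $m$ outside, where it is naturally truncated to $N(m)\leq P^{1/2+\varepsilon}$ by $N(dm^2)\leq 2P$. Stirling's formula \eqref{Stirling} controls the $\Gamma$-ratio in $\Phi_\pm(s,t)=\Gamma(\tfrac12\pm it+s)/\Gamma(\tfrac12\pm it)\cdot \pi^{-s}H(s)/s$ by $X^\varepsilon$ on the truncated contour, the dual-term phase $|c_-(d,t)|=1$ drops out, and the residual factors $|N(m)^{-2w}|=N(m)^{-2\varepsilon}\leq 1$ together with $|\widetilde{W}(w)|=O(1)$ are absorbed into the prefactor $(XN(m))^\varepsilon$, yielding $\mathcal{R}+O(P^{-A})$ upon summing over $\pm$. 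The main obstacle will be the precise bookkeeping: each of the three truncation errors (on the $n$-sum, on $\Im s$, and on $\Im w$) must be uniformly $O(P^{-A})$, and the $d$-dependent phase $N(\chi_d\psi)^{-it}$ inside $c_-$ must be dispatched cleanly under $|\cdot|$ so as not to disturb the bilinear structure in $(d_0,n)$ that $\mathcal{R}$ isolates for the subsequent large-sieve treatment.
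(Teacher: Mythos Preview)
Your overall strategy---open $L^{(2)}(\tfrac12+it,\chi_d\psi)$ by the approximate functional equation, Mellin-invert $G_{\pm t}$ and $W$, then truncate the contours---is exactly the paper's. But there is one missing ingredient that prevents the argument from closing as written.

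Lemma~\ref{lem:AFE} is stated only for $\chi=\chi_c\psi$ with $c$ \emph{square-free}. You apply it to $\chi_d\psi$ for a general primary $d$, which is not licensed. The paper first writes $d=d_0d_1^2$ with $d_0$ square-free, then uses
\[
L^{(2)}(\tfrac12+it,\chi_d\psi)=\prod_{\varpi\mid 2d_1}\Bigl(1-\frac{(\chi_{d_0}\psi)(\varpi)}{N(\varpi)^{1/2+it}}\Bigr)\,L(\tfrac12+it,\chi_{d_0}\psi),
\]
and applies \eqref{fcneqnL} to $L(\tfrac12+it,\chi_{d_0}\psi)$. The Euler product is $\ll N(d_1)^{\varepsilon}$ and is what generates the $(XN(m))^{\varepsilon}$ prefactor in $\mathcal{R}$. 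More importantly, the variable ``$m$'' in $\mathcal{R}$ is not the original $m$ from $D_{\psi,\psi'}$: after the decomposition one has $W(N(d_0d_1^2m^2)/P)$, and the outer sum in $\mathcal{R}$ is really over the pair $(m,d_1)$ (equivalently over $md_1$), which explains both the range $N(m)\le P^{1/2+\varepsilon}$ and the constraint $N(d_0)\le N(m)^{-2}P^{1+\varepsilon}$ inside. Without this step you cannot reach the stated form of $\mathcal{R}$, and in particular you would not have $d_0$ square-free, which is what the subsequent large-sieve bound \eqref{HBbilinear} needs.

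A smaller point: you say $|c_-(d,t)|=1$ ``drops out'', but $c_-$ carries the factor $N(\chi_{d_0}\psi)^{-it}$, which depends on $d_0$ and therefore sits \emph{inside} the absolute value over the $(d_0,n)$-sum; it cannot simply be discarded. This is harmless for the later application (it is a unimodular twist of the $d_0$-coefficient and does not affect \eqref{HBbilinear}), but strictly speaking the dual term contributes $N(d_0\psi)^{s/2-it}$ rather than $N(d_0\psi)^{s/2}$ inside the sum.
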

\begin{proof}
 We apply the decomposition \eqref{cdcomp} to write
$d=d_0d^2_1$ with $d_0, d_1$ primary and $d_0$ square-free, then we note that
\begin{equation*}
  L^{(2)}(\tfrac{1}{2} + it, \chi_d\psi) = \prod_{\varpi \mid 2d_1} \left(1-\frac{(\chi_{d_0}\psi)(\varpi)}{N(\varpi)^{1/2+it}}\right) L( \tfrac{1}{2} + it, \chi_{d_0}\psi).
\end{equation*}

  It follows from the above and \eqref{fcneqnL} that we have, for the function $G_{t}$ given by \eqref{defG},
\begin{equation*}
\begin{split}
& | D_{\psi, \psi'}(t, u, P; W)| \\
& \ll \sum_{\pm} \sum_{m, d_1 \odd}\frac{N(d_1)^{\varepsilon}}{N(md_1)}\Biggl| \sum_{  d_0 \odd}\sum_{n} \frac{(\chi_{d_0}\psi)(n) \psi'(d_0)}{N(n)^{1/2 \pm it}N(d_0)^{1/2 + iu}} G_{\pm t}\left(\frac{N(n)}{\sqrt{N(d_0\psi)}}\right) W\left(\frac{N(d_0d_1^2m^2)}{P}\right)\Biggr|.
\end{split}
\end{equation*}

  We can then truncate the sums at  $N(d_0d_1^2m^2) \leq P^{1+\varepsilon}$ and $N(n) \leq (T'P)^{1/2+\varepsilon}$ at the cost of an error $O(P^{-A})$ by the rapid decay of $W$ and $G_t$ on vertical strips.  Notice that the Mellin transform $\widehat{W}$ of $W$ is also an entire function with rapid decay on vertical strips. This allows us to recast $W$ and $G_{t}$ by Mellin inversion to obtain that
\[ | D_{\psi, \psi'}(t, u, P; W)| \ll P^{-A} + \sum_{\pm} \sum_{N(m) \leq P^{1/2+\varepsilon}} \frac{1}{N(m)^{1-\varepsilon}} \int\limits_{(\varepsilon)} \int\limits_{(\varepsilon)} |\mathcal{I} | |\dif w|\, \left|\frac{\dif s}{s}\right| , \]
 where
\[ \mathcal{I} =  \frac {\Gamma(\frac{1}{2}\pm it+s)}{\Gamma(\frac{1}{2}\pm it)} \pi^{-s}H(s) \sum_{\substack{N(d_0) \leq N(m)^{-2}P^{1+\varepsilon}\\  d_0 \odd}}\sum_{N(n) \leq (T'P)^{1/2+\varepsilon}} \frac{(\chi_{d_0}\psi)(n) \psi'(d_0) N(d_0\psi)^{s/2}}{N(n)^{1/2 \pm it+s}N(d_0)^{1/2 + iu+w}}   \widehat{W}(w) \left(\frac{P}{N(m)^2}\right)^w . \]
   We further apply \eqref{Stirling} and the rapid decay of   $\widehat{W}$ to truncate the $s, w$-integration to arrive at the conclusion of the lemma.
\end{proof}

 In what follows we establish another estimation for $D_{\psi, \psi'}(t, u, P;W)$. We apply Mellin inversion to see that
\begin{equation}\label{hence}
  D_{\psi, \psi'}(t, u, P; W) = \frac{1}{2\pi i} \int\limits_{(1)} Z(\tfrac{1}{2}+it , \tfrac{1}{2} + iu + w; \psi, \psi') \widehat{W}(w) P^w \dif w.
\end{equation}
  We seek to express the function $Z$ appearing in the integrand above as a sum of certain integrals. For this, consider for $\Re z \geq 3/2$, $R>0$,
\begin{equation} \label{expression}
- Z( \tfrac{1}{2}+it, z; \psi, \psi') + \frac{1}{2\pi i} \int\limits_{(3)} \frac{4^{1/2+it+ s}-4}{4^{1/2+it}-4} Z(\tfrac{1}{2} +it+ s, z; \psi, \psi') R^s H(s) \frac{\dif s}{s},
\end{equation}
  where $H(s)=e^{s^2}$ is the same function defined in Section \ref{sec: first approxfcneqn}. \newline

  We shift the line of integration in \eqref{expression} to $\Re s = -3$ to note that the possible pole of $Z$ at $s=1/2-it$ is cancelled out by the fraction in the integrand and the pole at $s=1-it- z$  contributes at most  $O(|\zeta_K(1+2s+2it)|R^{1-\Re z}(1+|z+it|)^{-A})$ by the rapid decay of $H$ and Proposition \ref{ExtensionOfZ(s,w)}. We then make a change of variable $s \mapsto -s$ and apply the functional equation \eqref{pracfunct} to the integration on the new line to see that for some absolutely bounded constants $\tilde{\alpha}^{(j)}_{\rho, \rho', \psi, \psi'}(t)$, the integration in \eqref{expression} equals
\begin{equation}\label{expression2}
\begin{split}
  -\frac{1}{2\pi i}  \int\limits_{(3)} \sum_{j = -5}^2  \sum_{ \rho, \rho'  \in \text{CG}} \tilde{\alpha}^{(j)}_{ \rho, \rho', \psi, \psi'}(t) \frac{\Gamma(\frac{1}{2}-it+s)}{\Gamma(\frac{1}{2}+it-s)} & Z(\tfrac{1}{2} -it+ s, z+it-s; \rho, \rho') \frac{H(s)}{2^{js} \pi^{2\pi} R^s} \frac{\dif s}{s} \\
  &+ O(R^{1-\Re z}(1+|it+z|)^{-A}).
  \end{split}
\end{equation}

 We substitute \eqref{expression} and \eqref{expression2} with $R = (PT)^{1/2}$   and $z = 1/2 + iu + w$ into \eqref{hence} to obtain that
\begin{equation}
\label{mainquant}
  D_{\psi, \psi'}(t, u, P; W) = \Delta + \tilde{\Delta} + O(P^{3/4}S^{-A}),
\end{equation}
 where the error term above follows from the rapid decay of $\widehat{W}$ and
\begin{align}
\label{defD}
\begin{split}
  \Delta =&  \left(\frac{1}{2 \pi i}\right)^2 \int\limits_{(1)}\int\limits_{(3)}  \frac{4^{\frac{1}{2}+it+s}-4}{4^{\frac{1}{2}+it}-4} Z(\tfrac{1}{2}+it+s, \tfrac{1}{2}+iu+w; \psi, \psi')\widehat{W}(w)\frac{H(s)}{s} T^{s/2}P^{w+s/2} \dif s\, \dif w,  \\
 \tilde{\Delta}  =&   \sum_{j=-5}^2 \sum_{ \rho, \rho'  \in \text{CG}} \tilde{\alpha}^{(j)}_{ \rho, \rho', \psi, \psi'}(t)  \left(\frac{1}{2 \pi i}\right)^2 \int\limits_{(1)}\int\limits_{(3)}\frac{\Gamma(\frac{1}{2}-it+s)}{\Gamma(\frac{1}{2}+it-s)} \\
& \hspace*{3cm} \times Z(\tfrac{1}{2}-it +s, \tfrac{1}{2}+i(u+t)+w-s; \rho, \rho')\widehat{W}(w)   \frac{H(s)}{2^{js} \pi^{2s} s} T^{-s/2} P^{w-s/2} \dif s\, \dif w.
\end{split}
\end{align}

  We now shift the $w$-integration in the above expression for $\Delta$ to $\Re w = -1$ to encounter a pole at $w=1/2-iu$, whose residue contributes $O(P^{1/2+\varepsilon}T^{\varepsilon}U^{-A})$, after applying Proposition \ref{ExtensionOfZ(s,w)}. By the rapid decay of $H(s)$ and $\widehat{W}$, we may shift the remaining integral over $s$ to $\Re(s)=\varepsilon>0$. Then a change of variable $w \mapsto -w$ for the integral on the new line gives that
\begin{equation}\label{defDa}
\begin{split}
\Delta =  \left(\frac{1}{2 \pi i}\right)^2 \int\limits_{(1)}\int\limits_{(\varepsilon)}  \frac{4^{\frac{1}{2}+it+s}-4}{4^{\frac{1}{2}+it}-4} Z(\tfrac{1}{2}+it+s, \tfrac{1}{2}+iu-w; \psi, \psi') & \widehat{W}(-w)\frac{H(s)}{s}T^{\frac{s}{2}}P^{-w+\frac{s}{2}} \dif s\, \dif w \\
& + O(P^{1/2+\varepsilon}T^{\varepsilon}U^{-A}).
\end{split}
\end{equation}

Now we apply the functional equations \eqref{AswAws} and \eqref{pracfunct} successively to see that for certain absolutely bounded constants $\beta^{(j)}_{\rho, \rho', \psi, \psi'}(u)$,
\begin{equation}\label{newfe1a}
\begin{split}
Z &(\tfrac{1}{2} + it+ s, \tfrac{1}{2} + iu - w; \psi, \psi') \\
& = \sum_{j = -5}^2 \sum_{\rho, \rho' \in \text{CG}}  \beta^{(j)}_{\rho, \rho', \psi, \psi' }(u)\frac{2^{-j w}\pi^{-2w}}{4^{\frac{1}{2}+iu-w} - 4} \frac{\Gamma(\frac{1}{2}-iu+w)}{\Gamma(\frac{1}{2}+iu-w)}   Z(\tfrac{1}{2}-iu+w, \tfrac{1}{2}+i(u+t)-w+s; \rho, \rho').
\end{split}
\end{equation}

  Similarly, we apply the functional equations \eqref{AswAws}, \eqref{pracfunct} and then \eqref{AswAws} successively to see that
for certain absolutely bounded constants $\tilde{\beta}^{(j)}_{ \tilde{\rho}, \tilde{\rho}', \rho, \rho'}(u, t)$,
\begin{equation}\label{newfe2a}
\begin{split}
&Z(\tfrac{1}{2} -it + s, \tfrac{1}{2} + i(u+t) + w-s; \rho, \rho') \\
& = \sum_{j = -5}^2 \sum_{\tilde{\rho}, \tilde{\rho}' \in \text{CG}}  \tilde{\beta}^{( j)}_{ \tilde{\rho}, \tilde{\rho}', \rho, \rho' }(u, t)\frac{2^{j(w-s)}\pi^{2(w-s)}}{4^{\frac{1}{2}+i(u+t)+w-s} - 4} \frac{\Gamma(\frac{1}{2}-i(u+t)-w+s)}{\Gamma(\frac{1}{2}+i(u+t)+w-s)} Z(\tfrac{1}{2}+iu+w, \tfrac{1}{2}-i(u+t)-w+s; \tilde{\rho}, \tilde{\rho}').
\end{split}
\end{equation}
 We substitute \eqref{newfe1a} into \eqref{defDa} and apply \eqref{defZ} to write $Z$ into double Dirichlet series to see that
\begin{equation}\label{finalD}
\begin{split}
  \Delta =  \sum_{\rho, \rho' \in \text{CG}}  \sum_{n, d \odd} & \frac{\chi_d(n)\rho(n)\rho'(d)}{N(n)^{1/2-iu} N(d)^{1/2+i(u+t)}} \,\,   \left(\frac{1}{2 \pi i}\right)^2 \int\limits_{(1)}\int\limits_{(\varepsilon)}\zeta^{(2)}_{K}(2s+2it+ 1) \\
  & \times \sum_{j = -5}^2 \beta^{(j)}_{ \rho, \rho', \psi, \psi' }(u)  \frac{4^{\frac{1}{2}+it+s}-4}{4^{\frac{1}{2}+it}-4}   \frac{2^{-jw}\pi^{-2w}}{4^{\frac{1}{2}+iu-w} - 4} \frac{\Gamma(\frac{1}{2}-iu+w)}{\Gamma(\frac{1}{2}+iu-w)}    \\
&  \times \left(\frac{N(n)P}{N(d)}\right)^{-w} \Biggl(\frac{N(d)}{\sqrt{T P}}\Biggr)^{-s} \widehat{W}(-w) \frac{H(s)}{s}  \dif s\, \dif w + O(P^{1/2+\varepsilon}T^{\varepsilon}U^{-A}).
  \end{split}
\end{equation}

  We also substitute \eqref{newfe2a} into $\tilde{\Delta}$ given in \eqref{defD} and apply \eqref{defZ} to write $Z$ into double Dirichlet series to see that for certain absolutely bounded constants $ \gamma^{(j_1, j_2)}_{ \rho, \rho', \psi, \psi' }(u, t)$,
\begin{equation}\label{finalDD}
\begin{split}
 &\tilde{\Delta}  = \sum_{\rho, \rho' \in \text{CG}}  \sum_{n, d \odd} \frac{\chi_d(n)\rho(n)\rho'(d)}{N(n)^{1/2+iu} N(d)^{1/2-i(u+t)}} \,\,   \left(\frac{1}{2 \pi i}\right)^2 \int\limits_{(1)}\int\limits_{(3)} \sum_{j_1, j_2 = -5}^2 \gamma^{(j_1, j_2)}_{ \rho, \rho', \psi, \psi' } (u, t)\zeta^{(2)}_{K}(2s-2it+ 1) \\
 &\times \frac{2^{-j_1s + j_2(  w-s)}\pi^{2w-4s}}{4^{\frac{1}{2}+i(u+t)+w-s} - 4}\frac{\Gamma(\frac{1}{2}-it+s)}{\Gamma(\frac{1}{2}+it-s)}  \frac{\Gamma(\frac{1}{2}-i(u+t)-w+s)}{\Gamma(\frac{1}{2}+i(u+t)+w-s)} \widehat{W}(w)       \frac{H(s)}{s} \left(\frac{N(n)}{N(d)P}\right)^{-w} \left(N(d)\sqrt{T P}\right)^{-s} \dif s\, \dif w.
\end{split}
\end{equation}

 We now substitute \eqref{finalD} and \eqref{finalDD} into \eqref{mainquant} to obtain an approximate functional equation for $D_{\psi, \psi'}(t, u, P; W)$. To further simplify the resulting expression, we write for $\varepsilon>0$,
\begin{equation*}
  T' \geq TX^{\varepsilon}, \quad S' \geq SX^{\varepsilon}, \quad U' \geq UX^{\varepsilon}, \quad X'= S'T'U'.
\end{equation*}

  We note that upon shifting the $s$-integration in \eqref{finalD} to $\Re s = A$ yields that the double integration there is
  \[ \ll \frac{N(d)U}{N(n)P} \left(\frac{\sqrt{TP}}{N(d)}\right)^A. \]
 It follows that restricting the $d$-sum to the range with $N(d) \leq (T' P)^{1/2}$ incurs an error of size $O(X^{-A})$.  We next shift the $w$-integration in \eqref{finalD} to $\Re w = A$ to see that the double integral is $\ll (U\sqrt{T}/(\sqrt{P} N(n))^A$ on $\Re s = \Re w = A$, upon using \eqref{Stirlingratio}.  Thus, at the cost of an error term of size $O(X^{-A})$, we may restrict the $n$-sum to
\begin{displaymath}
  N(n) \leq  \frac{(T')^{1/2}U'}{P^{1/2}}.
\end{displaymath}
Along the same line, we shift the $s$-contour in \eqref{finalDD} to $\Re s = A$ and apply \eqref{Stirlingratio} to see that it suffices to keep only the terms in the $d$-sum with
\begin{displaymath}
  N(d) \leq \frac{(T')^{1/2}S'}{P^{1/2}}
\end{displaymath}
and the error in this abridgement is $O(X^{-A})$.  Now shifting the $w$-contour in \eqref{finalDD} to $\Re w = A$ and applying \eqref{Stirlingratio} lead to that, with an error term of size $O(X^{-A})$, we may also shorten the $n$-sum to the range
\begin{displaymath}
  N(n) \leq   (T'P)^{1/2}.
\end{displaymath}

  After truncating the double sums in \eqref{finalD} and \eqref{finalDD} as above, we then shift the contours back to $\Re s = \Re w = \varepsilon$ and interchange the now finite $d, n$-double sum with the absolutely convergent $s, w$-double integral. By the rapid decay of $\widehat{W}$ and $H$ on vertical strips, we may further cut short the $s, w$-integration to arrive at the following estimation of $D_{\psi, \psi'}(t, u, P; W)$.
\begin{lemma}
\label{Dest2}
 With the notation as above. For any $\varepsilon>0, A>2$, we have
\begin{displaymath} \label{Dest2eq}
\begin{split}
  D_{\psi, \psi'}(t, u, P; W)  \ll  (X')^{\varepsilon} & \max_{\rho, \rho' \in \text{CG}}   \int\limits_{\varepsilon-i(X')^{\varepsilon}}^{\varepsilon+i(X')^{\varepsilon}} \int\limits_{\varepsilon-i(X')^{\varepsilon}}^{\varepsilon+i(X')^{\varepsilon}}   \Biggl|\sum_{\substack{n, d \odd \\ N(d)\leq (T'P)^{1/2}  \\ N(n) \leq (T'/P)^{1/2}U' }} \frac{\chi_d(n)\rho(n)\rho'(d)}{N(n)^{1/2-iu+w} N(d)^{1/2+i(u+t)-w+s}}\Biggr|  \\
  & +     \Biggl| \sum_{\substack{n, d \odd \\ d\leq (T'/P)^{1/2}S'  \\ n \leq (T'P)^{1/2}  }} \frac{\chi_d(n)\rho(n)\rho'(d)}{N(n)^{1/2+iu+w} N(d)^{1/2-i(u+t)-w+s}}\Biggr| \, |\dif s\, \dif w|   +O\left( \frac{P^{3/4}}{S^A}+\frac{P^{1/2+\varepsilon}T^{\varepsilon}}{U^A} \right) .
   \end{split}
\end{displaymath}
\end{lemma}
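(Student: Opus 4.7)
The plan is to start from the Mellin inversion representation \eqref{hence} and convert the $Z$-value in the integrand into Dirichlet series in $n, d$ via the functional equations \eqref{AswAws} and \eqref{pracfunct}, then truncate the resulting sums by contour shifts. Much of the groundwork is already assembled in equations \eqref{mainquant}--\eqref{finalDD}; the remaining task is the truncation and the tightening of the $s, w$-integration so as to match the lemma statement.

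Explicitly, I would first record the decomposition $D_{\psi,\psi'}(t, u, P; W) = \Delta + \tilde{\Delta} + O(P^{3/4}S^{-A})$ as in \eqref{mainquant}, with $\Delta$, $\tilde{\Delta}$ given by \eqref{defD}. In $\Delta$ I push the $w$-contour from $(1)$ to $(-1)$; the only pole crossed is that of $Z$ at $w = \tfrac{1}{2} - iu$, and its residue decays as $U^{-A}$ by the rapid decay of $\widehat{W}$, yielding the error $P^{1/2+\varepsilon}T^\varepsilon U^{-A}$. After reflecting $w \mapsto -w$ and applying the pair of functional equations \eqref{AswAws}--\eqref{pracfunct}, $Z$ becomes \eqref{newfe1a} and unfolding it via \eqref{defZ} produces the Dirichlet-series expression \eqref{finalD}. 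The analogous three-step functional-equation chain \eqref{AswAws}, \eqref{pracfunct}, \eqref{AswAws} applied inside $\tilde{\Delta}$ gives \eqref{finalDD}; no polar lines of $Z$ are crossed on this second side, so $\tilde{\Delta}$ contributes no additional residues.

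The key technical step is then the truncation of the $(n, d)$-sums in \eqref{finalD} and \eqref{finalDD}. For each double integral I would shift the $s$-contour to $\Re s = A$ and then the $w$-contour to $\Re w = A$ in turn, bounding the Gamma ratios via \eqref{Stirlingratio} and exploiting the rapid decay of $H$ and $\widehat{W}$ on vertical strips. For $\Delta$ the $s$-shift forces $N(d) \leq (T'P)^{1/2}$ up to an admissible error $O(X^{-A})$, and the subsequent $w$-shift forces $N(n) \leq (T')^{1/2}U'/P^{1/2}$; for $\tilde{\Delta}$ the $s$-shift yields $N(d) \leq (T')^{1/2}S'/P^{1/2}$ and the $w$-shift yields $N(n) \leq (T'P)^{1/2}$. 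With the sums now finite, I pull the contours back to $\Re s = \Re w = \varepsilon$, interchange the order of summation and integration, and truncate $|\Im s|, |\Im w| \leq (X')^\varepsilon$ using the rapid decay of $\widehat{W}$ and $H$; the maximum over $\rho, \rho' \in \text{CG}$ absorbs the finitely many characters generated by the functional equations.

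The main obstacle I anticipate is choreographing the contour shifts in the correct order so that each application of a functional equation occurs on a line of absolute convergence, while simultaneously tracking every potential residue coming from $Z$, from the various $4^{\bullet}-4$ prefactors, and from the Gamma ratios. Each apparent pole must either cancel against a compensating factor in the integrand or contribute to one of the permitted error terms $O(P^{3/4}S^{-A})$, $O(P^{1/2+\varepsilon}T^\varepsilon U^{-A})$, or $O(X^{-A})$; verifying this bookkeeping, and checking that the two sides of the final displayed bound cleanly correspond to $\Delta$ and $\tilde{\Delta}$ respectively, is the delicate part of the argument.
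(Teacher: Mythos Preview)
Your proposal is correct and follows essentially the same route as the paper: you correctly invoke the decomposition \eqref{mainquant} into $\Delta+\tilde\Delta$, handle the residue at $w=\tfrac12-iu$ in $\Delta$, apply the functional-equation chains \eqref{newfe1a} and \eqref{newfe2a} to reach \eqref{finalD}--\eqref{finalDD}, and then perform the same sequence of contour shifts (first $s$ to $\Re s=A$, then $w$ to $\Re w=A$, with \eqref{Stirlingratio} controlling the Gamma ratios) to truncate the $d$- and $n$-sums exactly as stated, before pulling back to $\Re s=\Re w=\varepsilon$ and truncating the integrals via the decay of $H$ and $\widehat W$. This is precisely the paper's argument.
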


\subsection{Completion of the proof}

In a manner similar to \cite[Section 6]{Blomer11}, we may assume that
\begin{equation*}
  T \leq U \asymp S.
\end{equation*}

  We then apply \eqref{HBbilinear} to estimate the character sums involved in Lemma \ref{Dest1}.  More specifically, we use \eqref{HBbilinear} to bound the integrand in $\mathcal{R}$ which is defined in the statement of Lemma~\ref{Dest1}, absorbing all except $\chi_{d_0}(n)$ into the coefficients $\tilde{a}_m$ or $\tilde{b}_n$ in \eqref{HBbilinear}.   Similarly, the bound in \eqref{HBbilinear} can be applied to the integrand on the right-hand side of \eqref{Dest2eq}.  As a result, we get
\begin{displaymath}
  D_{\psi, \psi'}(t, u, P; W) \ll U^{\varepsilon} \min \left(P^{1/2} + (TP)^{1/4}, (TP)^{1/4} +\left(\frac{T}{P}\right)^{1/4} U^{1/2}\right).
\end{displaymath}
   We deduce from the above and Lemma \ref{lemZapproxfcneqn} that
\begin{displaymath}
  Z(\tfrac{1}{2}+it, \tfrac{1}{2}+iu, \psi, \psi') \ll \frac{(TU)^{1/4+\varepsilon}}{U^{1/2}} + U^{\varepsilon} \max_{P \ll U}\left( (TP)^{1/4} + \min \left(P^{1/2} ,  \left(\frac{T}{P}\right)^{1/4} U^{1/2}\right)\right).
\end{displaymath}
  It is readily seen that the minimum above equals the first term if and only if $P \leq U^{2/3}T^{1/3}$. From this we deduce that
\begin{displaymath}
   Z(\tfrac{1}{2}+it, \tfrac{1}{2}+iu, \psi, \psi') \ll U^{1/3+\varepsilon}T^{1/6}.
\end{displaymath}
  This implies \eqref{Zbound} and completes the proof of Theorem \ref{ThmZbound}.

\vspace*{.5cm}

\noindent{\bf Acknowledgments.}   P. G. is supported in part by NSFC grant 11871082 and L. Z. by the Faculty Silverstar Grant PS65447 at the University of New South Wales (UNSW).  The authors would like to thank the anonymous referee for his/her insightful comments that resulted in some improvements of the presentation of the paper.

\bibliography{biblio}
\bibliographystyle{amsxport}

\end{document}